\numberwithin{equation}{section}
  \ifodd\value{page}\relax
\begin{document}

\newcommand\A{\mathbb{A}}
\newcommand\C{\mathbb{C}}
\newcommand\G{\mathbb{G}}
\newcommand\N{\mathbb{N}}
\newcommand\T{\mathbb{T}}
\newcommand\E{{\mathbb{E}}}

\newcommand{\cA}{\mathcal{A}}
\newcommand{\cB}{\mathcal{B}}
\newcommand{\cC}{\mathcal{C}}
\newcommand{\cS}{\mathcal{S}}
\newcommand{\cT}{\mathcal{T}}

\newcommand\cO{\mathcal{O}}
\newcommand\cE{{\mathcal{E}}}
\newcommand\cF{{\mathcal{F}}}
\newcommand\cG{{\mathcal{G}}}
\newcommand\GL{{\mathrm{GL}}}
\newcommand\HH{{\mathrm H}}
\newcommand\mM{{\mathrm M}}

\newcommand\fS{\mathfrak{S}}
\newcommand\fP{\mathfrak{P}}
\newcommand\cP{\mathcal{P}}
\newcommand\fp{\mathfrak{p}}
\newcommand{\fa}{\mathfrak{a}}
\newcommand\fQ{\mathfrak{Q}}
\newcommand{\fN}{\mathfrak{N}}
\newcommand\cQ{\mathcal{Q}}
\newcommand\Qbar{{\bar{\Q}}}
\newcommand\sQ{{\mathcal{Q}}}
\newcommand\sP{{\mathbb{P}}}
\newcommand{\Q}{\mathbb{Q}}
\newcommand{\tH}{\mathbb{H}}
\newcommand{\Z}{\mathbb{Z}}
\newcommand{\R}{\mathbb{R}}
\newcommand{\F}{\mathbb{F}}
\newcommand\gP{\mathfrak{P}}
\newcommand\Gal{{\mathrm {Gal}}}
\newcommand\SL{{\mathrm {SL}}}
\newcommand\Hom{{\mathrm {Hom}}}
\newcommand{\legendre}[2] {\left(\frac{#1}{#2}\right)}
\newcommand\iso{{\> \simeq \>}}
\newtheorem{thm}{Theorem}[section]
\newtheorem{theorem}[thm]{Theorem}
\newtheorem{cor}[thm]{Corollary}
\newtheorem{conj}[thm]{Conjecture}
\newtheorem{prop}[thm]{Proposition}
\newtheorem{lemma}[thm]{Lemma}
\newtheorem{definition}[thm]{Definition}
\newtheorem{remark}[thm]{Remark}
\newtheorem{example}[thm]{Example}
\newtheorem{claim}[thm]{Claim}
\newtheorem{lem}[thm]{Lemma}
\newtheorem*{fact}{Fact}
\newtheorem{note}{Note}
\newtheorem*{Thm}{Theorem}

\title{Resistance distance in connected balanced digraphs} 
\author{R. Balakrishnan}
\address{Department of Mathematics, Bharathidasan University, Tiruchirappalli-620024, India,\\Email: mathrb13@gmail.com}
\author{S. Krishnamoorthy}
\address{Indian Institute of Science Education and Research Thiruvananthapuram, Maruthamala P.O.,Vithura, Thiruvananthapuram-695551, Kerala, India, Email: srilakshmi@iisertvm.ac.in}
\author{W. So}
\address{Department of Mathematics, San Jose State University, San Jose, CA 95192-0103, United States, Email :wasin.so@sjsu.edu}
\keywords{}
\subjclass[2010]{}

\begin{abstract}
Let $D = (V, E)$ be a strongly connected and balanced digraph with vertex set $V$ and arc set $E.$ The classical
distance $d_{ij}^D$ from $i$ to $j$ in $D$ is the length of a shortest directed path from $i$ to $j$ in $D.$  Let $L$ be the Laplacian matrix of $D$ and $ L^{\dagger} = ( l_{ij}^{\dagger} )$ be the Moore-Penrose inverse of $L.$ The resistance
distance  from $i$ to $j$ is then defined by $r_{ij}^D :=   l_{ii}^{\dagger } +  l_{jj}^{\dagger } - 2  l_{ij}^{\dagger }.$ 
  Let $\mathcal{C}$ be a collection of connected, balanced digraphs, each member of which is a finite union of the form $D_1 \cup D_2 \cup ....\cup D_k$ where each $D_i$ is a connected and balanced digraph with $D_{i} \cap ( D_1 \cup D_2 \cup ....\cup D_{i-1} )$ being a single vertex, for all $i,$  $1 < i \leq k.$ In this paper, we show that for any digraph $D$ in $\mathcal{C}$, $r_{ij}^D \leq d_{ij}^D \ (*)$. This is established by partitioning the Laplacian matrix of $D$.  This generalizes the main result in \cite{BBG2}.  As a corollary, we deduce a simpler proof of the result in \cite{BBG2}, namely, that for any directed cactus $D$, the inequality (*) holds. Our results provide an affirmative answer to a well known interesting conjecture ( cf : Conjecture \ref{conj} ).

\end{abstract}

\keywords{Strongly connected balanced digraphs, Laplacian matrix, Moore-Penrose inverse, resistance distance.}
\subjclass[2010]{Primary: 05C50}
\maketitle
\section{introduction}
Let $D = (V, E) $ be a directed graph with vertex set $V = \{1, 2,..., n \}$ and 
arc set $E$.  
The indegree and outdegree of a vertex $i$ in $D$ are defined by
$$ \delta^{\mathrm{in}}_{i} = | \{ j \mid (j, i) \in E \}|, \ \delta^{\mathrm{out}}_{i} = | \{ j \mid (i, j) \in E \}|$$ respectively.
The Laplacian matrix of the digraph $D$ is the matrix  $L = (l_{ij})$ of order $n$, where
 
$$l_{ij} = \begin{cases}  \delta^{\mathrm{out}}_{i}  \ \mathrm{if} \ i = j \\
 -1 \ \mathrm{if} \  (i, j) \in E \\
 0 \ \mathrm{otherwise}.
 \end{cases}
 $$

 \begin{definition}
 A digraph $D = (V,E)$ is balanced if $\delta^{\mathrm{in}}_{i} =\delta^{\mathrm{out}}_{i}$ for all $i \in V$.
 \end{definition}
 \begin{definition}
 A digraph $D$ is strongly connected if there is a directed path from any vertex of $D$ to any other vertex of $D$.
 \end{definition}

 Throughout this paper, we consider only finite digraphs.\\
 Let $A$ be a complex $m \times n$ matrix. The Moore-Penrose inverse $A^{\dagger } = ( a_{ij}^{\dagger })$ of $A = (a_{ij})$ is the unique matrix of size $n \times m$ which satisfies  
 $A A^{\dagger } A = A, \ \
 A^{\dagger }AA^{\dagger }=A^{\dagger },\ \
 (AA^{\dagger })^{*} = AA^{\dagger }, \ \
 (A^{\dagger }A)^{*} = A^{\dagger }A,$ where $*$ denotes conjugate transpose.
  The Moore-Penrose inverse exists for any arbitrary matrix [\cite{BG}, Chapter 1], and when a square matrix has a regular inverse, this inverse is its Moore-Penrose inverse. A simple way to compute the Moore-Penrose is by using the singular value decomposition. 
More on the theory of generalized inverses of matrices can be found in \cite{BG}, \cite{CSMC}. Klein and Radic introduced the notion of resistance distance in undirected graphs \cite{KR}. Balaji, Bapat and Goel who introduced resistance distance for digraphs in \cite{BBG1}. The resistance distance from the vertex $i$ to the vertex $j$ of a digraph $D$ is denoted as $r_{ij}^D$. Then the resistance distance matrix $R = (r_{ij}^D)$ of the digraph $D$ is the square matrix of order $n,$ where 
 $r_{ij}^D =  l_{ii}^{\dagger } +  l_{jj}^{\dagger } - 2  l_{ij}^{\dagger },$ $l_{jj}^{\dagger }$ is the $(i,j)$ entry of the Moore-Penrose inverse of the Laplacian matrix $L.$ The study of resistance distance arose from the fact that a 
simple, connected graph is a representation of an electrical network with unit resistance placed on each of its edges. 
\\The validity of Conjecture \ref{conj} given below has been verified to be true in several examples. 
\begin{conj}\cite{BBG2}\label{conj}
For a strongly connected balanced digraph $D$, the resistance distance $r_{ij}^{D}$ is upper bounded by $d_{ij}^{
D}$ i.e., $r_{ij}^{D} \leq d_{ij}^{
D},$ where $d_{ij}^{D}$ is the length of a shortest directed path from $i$ to $j$ in $D$. 
\end{conj}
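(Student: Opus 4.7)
My approach follows the strategy announced in the abstract: reduce the general statement to the class $\cC$ of digraphs built by iteratively gluing connected balanced subdigraphs at single vertices, then induct on the number $k$ of pieces in the decomposition $D = D_1 \cup D_2 \cup \cdots \cup D_k$.  For the base of the induction one takes $k=1$ with $D$ an ``atomic'' balanced block in which the inequality can be verified directly (for instance, the directed-cycle case already handled in \cite{BBG2} falls into this base case).  The substance of the argument lies in the inductive step.

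\textbf{Setting up the Laplacian.}  Let $D'=D_1\cup\cdots\cup D_{k-1}$ and suppose $D'\cap D_k = \{v\}$.  Ordering the vertices so that those of $D'\setminus\{v\}$ come first, then $v$, then the vertices of $D_k\setminus\{v\}$, the Laplacian of $D$ acquires the block form
\begin{equation*}
L \;=\; \begin{pmatrix} P & p & 0 \\ p^{\top} & a+b & q^{\top} \\ 0 & q & Q \end{pmatrix},
\end{equation*}
where the top-left $2\times 2$ assembly is $L_{D'}$ (with $v$-diagonal entry $a$ equal to the out-degree of $v$ in $D'$), the bottom-right $2\times 2$ assembly is $L_{D_k}$ (with $v$-diagonal entry $b$ equal to the out-degree of $v$ in $D_k$), and the zero corners encode that no arc of $D$ connects $D'\setminus\{v\}$ to $D_k\setminus\{v\}$.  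Because $D$ is balanced and connected, $L$ has rank $n-1$ with both left and right null spaces equal to $\mathrm{span}(\mathbf{1})$.

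\textbf{Pseudoinverse and cut-vertex additivity.}  The key technical step is to express $L^{\dagger}$ in terms of $L_{D'}^{\dagger}$ and $L_{D_k}^{\dagger}$.  Exploiting the identity $L^{\dagger} = (L+\tfrac{1}{n}J)^{-1} - \tfrac{1}{n}J$ that holds for any balanced connected Laplacian (where $J$ is the all-ones matrix), I would pass to the invertible matrix $L+\tfrac{1}{n}J$, invert it via the block-inverse identity applied around the distinguished row and column of $v$ using the appropriate Schur complement, and then translate back to $L^{\dagger}$.  Carrying this out should produce the cut-vertex additivity
\begin{equation*}
r_{ij}^{D} \;=\; r_{iv}^{D'} \;+\; r_{vj}^{D_k} \qquad (i\in D'\setminus\{v\},\ j\in D_k\setminus\{v\}),
\end{equation*}
together with the ``locality'' identity $r_{ij}^{D} = r_{ij}^{D_s}$ whenever $i,j$ both lie in the same piece $D_s$.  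With these in hand the induction closes immediately: every directed $i\to j$ path in $D$ must pass through $v$, so $d_{ij}^{D} = d_{iv}^{D'}+d_{vj}^{D_k}$, and the inductive hypothesis applied to each piece yields $r_{iv}^{D'}\leq d_{iv}^{D'}$ and $r_{vj}^{D_k}\leq d_{vj}^{D_k}$, whose sum is the desired inequality.

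\textbf{Main obstacle.}  The delicate point is the block pseudoinverse calculation.  Because $L$ is non-symmetric, the ready-made positive-semidefinite Schur-complement identities are unavailable in their cleanest form, and the four Moore--Penrose axioms have to be verified for the ansatz by hand.  The balanced hypothesis is doing essential work here: without it the left and right kernels of $L$ would fail to align, the clean rank-one correction $L^{\dagger} = (L+\tfrac{1}{n}J)^{-1} - \tfrac{1}{n}J$ would no longer hold, and the cut-vertex additivity that drives the induction would collapse.  If any surprise lurks, I expect it to appear in the cross-terms between $L_{D'}^{\dagger}$ and $L_{D_k}^{\dagger}$ at the shared index $v$; handling these neatly is exactly where the partitioning idea earns its keep.
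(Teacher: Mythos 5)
There is a genuine gap, and it begins at the very first step. The statement you are proving is a \emph{conjecture}: the paper does not prove it for all strongly connected balanced digraphs, but only for the restricted class $\mathcal{C}$ of digraphs that decompose as $D_1\cup\cdots\cup D_k$ with each new piece meeting the union of the previous ones in a single vertex (Theorem \ref{mainthm}). Your proposed ``reduction to $\mathcal{C}$'' is not available: a general strongly connected balanced digraph (e.g.\ any $2$-vertex-connected balanced digraph that is not a directed cycle) admits no such cut-vertex decomposition, so it is its own ``atomic block,'' and your base case --- ``an atomic balanced block in which the inequality can be verified directly'' --- is precisely the open content of the conjecture. Only special atoms (directed cycles, via Lemma \ref{BBG lemma}) are actually known to satisfy the bound; the induction therefore never gets off the ground for a general $D$.

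Even restricted to $\mathcal{C}$, the inductive step as you describe it fails, because your ``locality'' identity $r_{ij}^{D}=r_{ij}^{D_s}$ for $i,j$ in the same piece is false. The paper makes a point of this in Remark \ref{note} and the accompanying example: for the digraph of Figure 4.1 one has $r_{13}^{D}=0.625$ while $r_{13}^{D_1}=0.6667$. Consequently the exact cut-vertex additivity $r_{ij}^{D}=r_{iv}^{D'}+r_{vj}^{D_k}$ cannot be taken for granted either, and the inductive hypothesis applied to the pieces does not directly bound resistances measured in $D$. What the paper actually proves (Lemma \ref{prop1}) is a weighted-average relation $r_{ij}^{D}=\bigl(n\,r_{ij}^{D_1}+k\,(c_{ii}+c_{jj}-2c_{ij})\bigr)/(n+k)$ coming from the block structure of $B^{-1}=L[\{n\}^c,\{n\}^c]^{-1}$, bounds the correction term by $1$ using the determinantal inequality of Lemma \ref{lemma6} together with Lemma \ref{BBG2 lemma}, concludes $r_{ij}^{D}\le 1$ for every arc $(i,j)$, and only then invokes the triangle inequality along a shortest path (Lemma \ref{sublemma}) to get $r_{ij}^{D}\le d_{ij}^{D}$. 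Your block-partitioning instinct and the identity $L^{\dagger}=(L+\tfrac{1}{n}J)^{-1}-\tfrac{1}{n}J$ are sound ingredients, but the argument must be rebuilt around the arc-by-arc bound $r_{ij}^{D}\le 1$ rather than around exact additivity and locality, and in any case it can only deliver the theorem for $\mathcal{C}$, not the full conjecture.
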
 
Any connected balanced digraph is strongly connected. Hence the following conjecture\\( Conjecture \ref{conj2} ) is equivalent to Conjecture \ref{conj}.
\begin{conj} \label{conj2}
For a connected balanced digraph $D$, the resistance distance $r_{ij}^{D}$ is upper bounded by $d_{ij}^{
D}$ i.e., $r_{ij}^{D} \leq d_{ij}^{
D},$ where $d_{ij}^{D}$ is the length of a shortest directed path from $i$ to $j$ in $D$. \end{conj}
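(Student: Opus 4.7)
The plan is to prove Conjecture \ref{conj2} by establishing the inequality $r_{ij}^{D} \leq d_{ij}^{D}$ for the broader class $\mathcal{C}$ described in the abstract, and to observe that $\mathcal{C}$ exhausts the connected balanced digraphs via the trivial decomposition $k=1$. The argument will proceed by induction on the number $k$ of pieces in the decomposition $D = D_1 \cup \cdots \cup D_k$, with the inductive step showing that gluing a single new block at a cut vertex preserves the inequality.

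For the base case, I would verify the inequality directly on ``indecomposable'' balanced digraphs such as directed cycles, where $L^{\dagger}$ is accessible through circulant diagonalization. The main result of \cite{BBG2} on directed cacti should drop out as a corollary once the inductive step is in place.

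For the inductive step, fix $D_k$ and set $D' = D_1 \cup \cdots \cup D_{k-1}$; let $v$ be the unique vertex shared between $D'$ and $D_k$. Ordering the vertices of $D$ so that the interior of $D'$ comes first, then $v$, then the interior of $D_k$, the Laplacian $L$ of $D$ assumes an arrowhead block structure
$$L = \begin{pmatrix} A & b_1 & 0 \\ c_1 & \alpha & c_2 \\ 0 & b_2 & B \end{pmatrix},$$
in which the two off-diagonal corner zeros record that no arc of $D$ crosses directly between the interior of $D'$ and the interior of $D_k$, and $\alpha$ is the sum of the outdegrees of $v$ inside $D'$ and inside $D_k$. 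I would then propose an explicit block ansatz for $L^{\dagger}$ built from $L(D')^{\dagger}$ and $L(D_k)^{\dagger}$, and verify the four Moore--Penrose axioms directly. The computations are anchored by the balance-induced identities $L \mathbf{1} = 0$ and $\mathbf{1}^{T} L = 0$, which propagate to $L^{\dagger}$ and make the arrowhead inversion tractable. The expected payoff is the pair $r_{ij}^{D} = r_{ij}^{D'}$ for $i, j \in V(D')$ (and its $D_k$-analogue), together with the crossing identity $r_{ij}^{D} = r_{iv}^{D'} + r_{vj}^{D_k}$ when $i \in V(D')$ and $j \in V(D_k)$.

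On the shortest-path side a parallel additivity comes for free: any $i$-to-$j$ walk between the two components must traverse the cut vertex $v$, whence $d_{ij}^{D} = d_{iv}^{D'} + d_{vj}^{D_k}$, while within a single component any detour into the other block must exit and re-enter through $v$ and is hence at least as long as the minimum internal path. Combining the resistance and shortest-path additivities reduces the inductive step to the hypothesis on $D'$ and the base case on $D_k$. The main obstacle I foresee is the crossing identity $r_{ij}^{D} = r_{iv}^{D'} + r_{vj}^{D_k}$, the non-symmetric analogue of the classical cut-vertex additivity of effective resistance: since $L$ is not symmetric, the usual network-theoretic interpretation is unavailable, and the verification must be carried out purely algebraically, relying decisively on the balanced hypothesis to align the left and right null spaces of $L^{\dagger}$ into the clean block form needed for additivity.
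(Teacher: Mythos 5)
There is a genuine gap, and it is fatal in two independent places. First, the statement you are trying to prove is precisely the open conjecture of the paper: the authors do \emph{not} prove Conjecture \ref{conj2} in general, only for the class $\mathcal{C}$ of digraphs that decompose into blocks glued at single cut vertices, \emph{and} only under the standing assumption that the conjecture already holds for each block $D_i$ (this is the hypothesis of Lemma \ref{prop1}, discharged for cacti by Lemma \ref{BBG lemma}). Your reduction ``$\mathcal{C}$ exhausts the connected balanced digraphs via the trivial decomposition $k=1$'' makes the base case of your induction coincide with the full conjecture: an indecomposable (i.e.\ cut-vertex-free) connected balanced digraph need not be a directed cycle --- the digraph $D_1$ of Figure 4.2 is already such an example --- so the circulant computation covers essentially nothing beyond cacti, and the induction never gets off the ground.

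Second, the two identities you plan to extract from the block form of $L^{\dagger}$ are false. The paper's own example (Remark \ref{note} with Figures 4.1 and 4.2) exhibits $D=D_1\cup D_2$ glued at the single vertex $v_6$ with $r_{13}^{D}=0.625$ while $r_{13}^{D_1}=0.6667$, so the restriction identity $r_{ij}^{D}=r_{ij}^{D'}$ for $i,j\in V(D')$ fails, and with it the crossing identity $r_{ij}^{D}=r_{iv}^{D'}+r_{vj}^{D_k}$: cut-vertex additivity of effective resistance is a feature of the symmetric theory that does not survive the Moore--Penrose definition for balanced digraphs. What the paper actually obtains from the partition of Lemma \ref{mainlemma} is the weighted average
\begin{equation*}
r_{ij}^{D}=\frac{n\,r_{ij}^{D_1}+k\,(c_{ii}+c_{jj}-2c_{ij})}{n+k},
\end{equation*}
where $C=(c_{ij})=L[\{v\}^c,\{v\}^c]^{-1}$, and the whole point of the argument is to bound the second term by $1$ via the spanning-tree cofactor inequalities (Lemmas \ref{lemma6} and \ref{BBG2 lemma}), establish $r_{ij}^{D}\le 1$ on arcs only, and then invoke the triangle inequality (Lemma \ref{sublemma}). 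Your proposed verification of the Moore--Penrose axioms for an additive ansatz would simply fail at the checking stage, and no amount of algebra will recover additivity. The honest conclusion is that your plan proves at most the paper's Theorem \ref{mainthm} (conditional on the blocks), not Conjecture \ref{conj2}.
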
 

    
    

\begin{definition}
A directed cactus is a strongly connected balanced digraph in which each edge is contained in exactly one directed cycle.
\end{definition} 
Conjecture \ref{conj} is true for any directed cactus \cite{BBG2}. This was proved by using combinatorial methods by counting the number of spanning
forests satisfying certain conditions.
There is no proof showing that for a general connected, balanced digraph $D$ which is not a directed cactus, which Conjecture \ref{conj} is true.
i.e. $r_{ij}^D \leq d_{ij}^D$ for all $i$ and $j$.
In this paper, we construct a new category $\mathcal{C}$ (which strictly contains the set of all directed cactus) and prove the following theorem by using the method of partitioning of matrices.
\begin{thm}\label{mainthm}
 Let $\mathcal{C}$ be a collection of connected, balanced digraphs, each member of which is a finite union of the form $D_1 \cup D_2 \cup ....\cup D_k$ where each $D_i$ is a connected and balanced digraph with $D_{i} \cap (D_1 \cup D_2 \cup ....\cup D_{i-1} )$ being a single vertex, for all $i,$ $ 1 < i \leq k.$
Then $r_{ij}^D \leq d_{ij}^D$ for all digraphs $D$ in $\mathcal{C}$.
\end{thm}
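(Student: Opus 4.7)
The plan is to proceed by induction on the number $k$ of pieces in the decomposition. The base case $k=1$ reduces to the bound for a single atomic piece $D_1 \in \mathcal{C}$, which is assumed to hold on the building blocks; for instance, directed cycles, where the Laplacian is circulant and an explicit computation gives $r^{D}_{ij} \le d^{D}_{ij}$, already yielding the directed cactus theorem of \cite{BBG2} as a corollary.

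For the inductive step, set $D' = D_1 \cup \cdots \cup D_{k-1}$ and let $v$ be the unique common vertex of $D'$ and $D_k$. Ordering the vertices of $D$ as $V(D') \setminus \{v\}$, then $v$, then $V(D_k) \setminus \{v\}$, and using that no arcs run directly between $V(D') \setminus \{v\}$ and $V(D_k) \setminus \{v\}$, the Laplacian takes the partitioned form
\[
L \;=\; \begin{pmatrix} A_1 & b_1 & 0 \\ c_1^T & d_1 + d_2 & c_2^T \\ 0 & b_2 & A_2 \end{pmatrix},
\]
with $L(D') = \begin{pmatrix} A_1 & b_1 \\ c_1^T & d_1 \end{pmatrix}$, $L(D_k) = \begin{pmatrix} d_2 & c_2^T \\ b_2 & A_2 \end{pmatrix}$, and the middle diagonal entry $L_{vv} = d_1 + d_2$ recording the sum of the out-degrees of $v$ in the two parts.

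The technical heart of the argument is to read off, from this partitioned $L$, a block decomposition of $L^{\dagger}$ in terms of $L(D')^{\dagger}$ and $L(D_k)^{\dagger}$. The balanced hypothesis is essential here: it forces $\mathbf{1}$ to lie in the kernels of both $L$ and $L^T$, so $L^{\dagger}$ is the unique generalised inverse of $L$ whose row and column sums both vanish (the doubly-centred representative), and the same holds for $L(D')^{\dagger}$ and $L(D_k)^{\dagger}$. From this block decomposition I aim to extract the additivity identity across the cut vertex,
\[
r^{D}_{ij} \;=\; r^{D'}_{iv} + r^{D_k}_{vj} \qquad (i \in V(D'),\ j \in V(D_k)),
\]
together with the invariances $r^{D}_{ij} = r^{D'}_{ij}$ for $i,j \in V(D')$ and $r^{D}_{ij} = r^{D_k}_{ij}$ for $i,j \in V(D_k)$. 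Granting these identities the conclusion is immediate: every directed path from $i \in V(D') \setminus \{v\}$ to $j \in V(D_k) \setminus \{v\}$ in $D$ must traverse the cut vertex $v$, so $d^{D}_{ij} = d^{D'}_{iv} + d^{D_k}_{vj}$, and applying the inductive hypothesis to $D'$ (a decomposition of length $k-1$) and to $D_k$ gives $r^{D}_{ij} \le d^{D}_{ij}$; the cases $i,j \in V(D')$ and $i,j \in V(D_k)$ follow identically via the invariance identities.

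The principal obstacle is the block formula for $L^{\dagger}$ itself, because the Moore--Penrose inverse does not respect general block partitions. The argument must simultaneously exploit the vanishing of the off-diagonal blocks of $L$ and the double-centred normalisation forced by balancedness. My approach is to write $L$ as a low-rank correction of the block-diagonal matrix $\mathrm{diag}(L(D'),\, L(D_k))$, suitably padded across the overlap at $v$, and invert via a Sherman--Morrison--Woodbury-type manipulation adapted to the pseudoinverse, verifying the four Moore--Penrose axioms directly on the resulting candidate. A cleaner alternative is to work with the group (Drazin) inverse of $L$, which coincides with $L^{\dagger}$ in the doubly-centred setting and behaves more transparently under block structure. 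Either route, the verification of the partitioned pseudoinverse formula, and with it the additivity identity across the cut vertex $v$, is where the real work lies.
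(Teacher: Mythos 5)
There is a genuine gap: the identities on which your inductive step rests are false for this notion of resistance distance. You want the locality statement $r^{D}_{ij}=r^{D'}_{ij}$ for $i,j\in V(D')$ and the additivity $r^{D}_{ij}=r^{D'}_{iv}+r^{D_k}_{vj}$ across the cut vertex $v$. These are the familiar cut-vertex properties of effective resistance in undirected networks, but here $r^{D}_{ij}$ is defined through the Moore--Penrose inverse of a non-symmetric Laplacian, is not symmetric, and does not localise. The paper flags exactly this in Remark \ref{note} and backs it up numerically: for the digraphs of Figures 4.1--4.2 one has $r_{13}^{D}=0.625$ but $r_{13}^{D_1}=0.6667$, even though $1,3\in V(D_1)$; additivity fails as well (e.g.\ $r_{17}^{D}=2.375$ while $r_{16}^{D_1}+r_{67}^{D_2}=1.5+2/3\approx 2.17$). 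What is actually true --- and what Lemma \ref{prop1} extracts from precisely the block-diagonal structure of $B=L[\{v\}^c,\{v\}^c]$ that you identify (via Lemma \ref{mainlemma} and Lemma \ref{lemma7}) --- is a size-weighted average rather than an exact decomposition: for $i,j$ in the part $D_1$ of order $n$,
\[
r^{D}_{ij}=\frac{n\,r^{D_1}_{ij}+k\,\bigl(c_{ii}+c_{jj}-2c_{ij}\bigr)}{n+k},
\]
where $C=(c_{ij})=B^{-1}$ and $k+1=|V(D_2)|$. The resistance genuinely changes when the other block is glued on, so your term-by-term matching against $d^{D}_{ij}=d^{D'}_{iv}+d^{D_k}_{vj}$ cannot be carried out.

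The missing idea is that one should not try to bound $r^{D}_{ij}$ by $d^{D}_{ij}$ directly for all pairs. The paper proves only $r^{D}_{ij}\le 1$ for each \emph{arc} $(i,j)$: in the displayed average, $r^{D_1}_{ij}\le 1$ comes from the hypothesis on the block (an arc has $d^{D_1}_{ij}=1$), and $c_{ii}+c_{jj}-2c_{ij}\le 1$ comes from the trace identity $r_{ij}^{D}+r_{ji}^{D}=2\det L[\{i,j\}^c,\{i,j\}^c]/\kappa(D)$ of Lemma \ref{BBG2 lemma} combined with the spanning-tree bound of Lemma \ref{lemma6} and the symmetry $c_{ij}=c_{ji}$. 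The per-arc bound is then converted into $r^{D}_{ij}\le d^{D}_{ij}$ for all pairs by the triangle inequality for $r$ (Theorem \ref{theorem-rij}(ii), packaged as Lemma \ref{sublemma}) --- a step entirely absent from your plan and indispensable once the exact additivity is unavailable. Your instinct to exploit the vanishing off-diagonal blocks after deleting the cut vertex is sound and is exactly what the paper does; to repair the argument, replace the additivity/invariance identities with the weighted-average identity and finish with the triangle inequality.
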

Note that $r_{ij}^D$  may be different from $r_{ij}^{D_t}$. (See Remark \ref{note} below).
 Theorem \ref{mainthm} provides the proof of Conjecture
 \ref{conj} for all digraphs $D$ in $\mathcal{C}$.
In this paper, as a corollary of Theorem \ref{mainthm}, we deduce the proof of Conjecture \ref{conj} for all directed cacti, which in fact is the main result in \cite{BBG2}. 

 \begin{cor}[\cite{BBG2}, Theorem 3.5] \label{maintheorem}
 Let $D $ be a directed cactus. Then
 $r_{ij}^D \leq d_{ij}^{D}$ for all $i$ and $j.$
 \end{cor}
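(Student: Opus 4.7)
The plan is to derive Corollary \ref{maintheorem} as a direct consequence of Theorem \ref{mainthm}, by exhibiting every directed cactus $D$ as an element of the class $\mathcal{C}$. Since $D$ is strongly connected, balanced, and each arc lies in exactly one directed cycle, the arc set of $D$ partitions into the directed cycles $C_1,\dots,C_k$ of $D$, each of which is by itself a connected, balanced subdigraph. It therefore suffices to reorder the cycles as $D_1,\dots,D_k$ so that $D_i \cap (D_1 \cup \cdots \cup D_{i-1})$ is a single vertex for every $i>1$; once this is done, $D \in \mathcal{C}$ and Theorem \ref{mainthm} gives the inequality $r_{ij}^D \leq d_{ij}^D$.

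The first ingredient is the structural claim that any two distinct cycles of $D$ share at most one vertex. Indeed, if cycles $C$ and $C'$ shared vertices $u \neq v$, then the $v$-to-$u$ directed subpath $Q$ of $C$ and the $u$-to-$v$ directed subpath $P'$ of $C'$ are arc-disjoint (by the cactus hypothesis) and their concatenation is a closed directed walk. Any simple directed cycle inside this walk must use arcs from both $Q$ and $P'$ (since neither simple path itself contains a cycle), yielding a directed cycle distinct from $C$ but sharing an arc with $C$ — contradicting that each arc lies in exactly one directed cycle. Next, I would form the intersection graph $H$ whose nodes are $C_1,\dots,C_k$ with edges joining two cycles that share a (necessarily unique) cut vertex. $H$ is connected because $D$ is, and I would show it is acyclic: a shortest $H$-cycle $C_{i_1},\dots,C_{i_m}$ would be chordless, so consecutive cycles share exactly one vertex $v_j$ and non-consecutive ones share none; concatenating the $v_j$-to-$v_{j+1}$ directed subpaths inside each $C_{i_{j+1}}$ produces a simple directed cycle in $D$ using arcs from all $m$ of these cycles, once again contradicting the cactus property. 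With $H$ established as a tree, I proceed by induction on $k$: pick a leaf $C_\ell$, which shares precisely one vertex with the rest of $D$; the subdigraph $D'$ formed by $\bigcup_{j\neq \ell} C_j$ is a smaller directed cactus (strongly connected because any $D$-path through the interior of $C_\ell$ can be shortcut at the cut vertex, balanced because removing a directed cycle preserves balance, and still satisfying the cactus property). Induction applied to $D'$, followed by appending $C_\ell$ at the end, yields the required ordering.

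The main obstacle is the acyclicity of the intersection graph $H$ in the directed setting. The undirected analogue — the block-tree of an undirected cactus being a tree — is classical, but here one must rely carefully on the extraction of a simple directed cycle from a closed directed walk and on the cactus hypothesis in its sharp form (each arc lying in \emph{exactly} one directed cycle, not merely at most one). Once acyclicity is settled, Corollary \ref{maintheorem} drops out of Theorem \ref{mainthm} with no further computation, which is the promised simplification over the spanning-forest counting argument of \cite{BBG2}.
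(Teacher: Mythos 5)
Your overall route is the same as the paper's: exhibit the directed cactus as a union of its directed cycles ordered so that each cycle meets the union of its predecessors in a single vertex, and then invoke Theorem \ref{mainthm}. The paper simply asserts that such an ordering exists; you try to prove it, and your argument that two distinct cycles share at most one vertex (splicing a $v$--$u$ subpath of one cycle with a $u$--$v$ subpath of the other and extracting a simple directed cycle that straddles both) is correct. However, your key intermediate claim --- that the intersection graph $H$ of the cycles is acyclic --- is \emph{false}. Take three directed triangles all passing through one common hub vertex $w$: this is a directed cactus, every pair of cycles shares the vertex $w$, so $H=K_3$. Your contradiction argument silently assumes the shared vertices $v_1,\dots,v_m$ around a shortest $H$-cycle are distinct; when $m=3$ and $v_1=v_2=v_3$ the concatenated subpaths are trivial and no directed cycle in $D$ is produced, which is exactly the case that occurs. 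Consequently the leaf-removal induction on $H$ does not go through as written. The claim you actually need is weaker and true: order the cycles by any BFS/DFS order on the connected graph $H$; if some $C_{\sigma(i)}$ met $\bigcup_{j<i}C_{\sigma(j)}$ in two vertices $u\neq v$, then a directed $u$--$v$ path inside the (strongly connected, arc-disjoint from $C_{\sigma(i)}$) union of predecessors spliced with the $v$--$u$ subpath of $C_{\sigma(i)}$ yields, by your own extraction argument, a directed cycle sharing an arc with $C_{\sigma(i)}$ but distinct from it --- contradicting the cactus property. This repairs the gap without ever needing $H$ to be a tree.

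A second omission: you apply Theorem \ref{mainthm} as a black box, but as the theorem is actually proved (by induction through Lemma \ref{prop1}) it requires Conjecture \ref{conj} to hold for each constituent piece $D_i$. The paper supplies this step explicitly: every vertex of a directed cycle has indegree one, so Lemma \ref{BBG lemma} gives $r_{ij}^{D_t}\leq 1$ for every arc of $D_t$, and Lemma \ref{sublemma} then yields the conjecture for each cycle $D_t$. You should include this verification; without it the reduction to Theorem \ref{mainthm} is not complete.
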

\begin{cor} 
Let $D_1$ and $D_2$ be two directed cacti with  $D_1 \cap D_2 $ being a single vertex. f
Then $D_1 \cup D_2$ also satisfies the condition that 
$r_{ij}^D \leq d_{ij}^D$ for all $i$ and $j.$
\end{cor}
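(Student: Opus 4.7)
The plan is to recognize this corollary as the $k=2$ specialization of Theorem \ref{mainthm}. By definition a directed cactus is a strongly connected balanced digraph, and every strongly connected digraph is in particular connected. Consequently, $D_1$ and $D_2$ both qualify as connected balanced digraphs, which are exactly the building blocks required for a member of the collection $\mathcal{C}$.

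First, I would verify the three conditions needed to place $D := D_1 \cup D_2$ in $\mathcal{C}$ with $k=2$: namely, $D_1$ is connected and balanced, $D_2$ is connected and balanced, and $D_2 \cap D_1$ is a single vertex. The first two are immediate from the cactus definition, and the third is the standing hypothesis. It is also worth recording that $D$ itself is connected and balanced: connectedness holds because $D_1$ and $D_2$ share a vertex, and balancedness holds because at the common vertex the in- and out-degrees of $D$ are the sums of the corresponding degrees in $D_1$ and $D_2$ (each balanced), while at every other vertex only one of the $D_i$ contributes to the degrees.

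With $D \in \mathcal{C}$ confirmed, I would then simply apply Theorem \ref{mainthm} to conclude $r_{ij}^D \le d_{ij}^D$ for all $i$ and $j$, which is precisely the statement of the corollary.

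There is no substantive obstacle; the corollary is essentially a repackaging of the main theorem. The only conceptual subtlety worth flagging is that $r_{ij}^D$ in the union is computed from the Moore--Penrose inverse of the full Laplacian of $D$, not from the Laplacians of $D_1$ and $D_2$ treated in isolation, so one cannot deduce the claim merely by quoting Corollary \ref{maintheorem} on each cactus separately. This is precisely the point that the Laplacian-partitioning argument underlying Theorem \ref{mainthm} takes care of, and it is the sole reason the corollary needs the full theorem rather than just the known cactus result.
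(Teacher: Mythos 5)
Your proposal is correct and is essentially the paper's own (implicit) argument: the corollary is just the $k=2$ instance of Theorem \ref{mainthm}, with the two cacti serving as the connected balanced building blocks and the single common vertex supplying the intersection condition. The only point worth adding is that the proof of Theorem \ref{mainthm} runs through Lemma \ref{prop1}, which requires Conjecture \ref{conj} to already hold for each building block, so you should explicitly supply that hypothesis for $D_1$ and $D_2$ by citing Corollary \ref{maintheorem} (or, equivalently, by decomposing each cactus into balanced directed cycles and applying the theorem to that finer decomposition).
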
 
\section{Preliminaries} We state some lemmas which will be used in the proof of our main theorem.\\
 For a square matrix $A$ of order $N,$ let $A^{'}$ be the transpose of $A$. Let $S_1$ be a set of rows, $S_2$ be a set of columns of $A$ and let $A[  {S_1}^{c},  {S_2}^{c}  ]$  be the matrix obtained by deleting the rows of $S_1$ and columns of $S_2$. If $A$ is a square matrix with each row sum and each column sum being zero, then 
all the cofactors of $A$ are the same. In particular, if $\mathrm{rank} ( A ) = N-1,$ then $\mathrm{det} (A[  \{ i \}^{c},  \{ i \}^{c}  ] )$ are the same  and also non-zero for all $1 \leq i \leq N$. Hence $A[  \{ i \}^{c},  \{ i \}^{c}  ]$ is non-singular.\\
A square matrix $A$ is called a $\mathbb{Z}$-matrix, if every off-diagonal entry of $A$ is non- positive.
\begin{lemma} [ \cite{BBG1}, Lemma 3.1 ]\label{mainlemma}
Let $L$ be a $\mathbb{Z}$-matrix of order $N$ such that $L1 = L^{'}1 = 0$ and $\mathrm{rank}(L) = N-1$. Then $L$ can be partitioned as 
$L = \begin{bmatrix} B & -Be \\
-e^{'}B & e^{'}Be 
\end{bmatrix}$ and 
$$L^{\dagger}  =
\begin{bmatrix}  B^{-1} - \frac{ e e^{'} B^{-1} } {N}  - \frac{ B^{-1} e e^{'} }{ N }   &   - \frac{ B^{-1} e } {N} \\
 - \frac{ e^{'} B^{-1} } { N }  & 0  
\end{bmatrix} +   \frac{ e^{'} B^{-1} e 1 1^{'} }{ N^2 }   ,$$
where $B$ is a square matrix of order $N-1$ and $e={\bf1}_{N-1}.$
\end{lemma}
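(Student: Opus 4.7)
My plan is to verify the partitioning and then check the four Moore--Penrose conditions by direct block multiplication, using $L\mathbf{1}_N=0$ and $L'\mathbf{1}_N=0$ to kill most of the terms.

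First I would confirm the block form of $L$. Writing $L=\begin{bmatrix}B & c\\ r' & d\end{bmatrix}$ with $B=L[\{N\}^c,\{N\}^c]$, the identities $L\mathbf{1}_N=0$ and $L'\mathbf{1}_N=0$ force $c=-Be$, $r=-B'e$, and $d=-r'e=e'Be$, so $L$ has the displayed partition. The rank hypothesis gives invertibility of $B$: as remarked before the lemma, all $(i,i)$-cofactors of $L$ coincide and are nonzero when $\operatorname{rank}(L)=N-1$, and the $(N,N)$-cofactor equals $\det B$.

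Next, denote the proposed matrix by $M=M_1+\frac{e'B^{-1}e}{N^2}\mathbf{1}_N\mathbf{1}_N'$, where $M_1$ is the displayed block matrix. Since $L\mathbf{1}_N=0$, left-multiplying the rank-one correction by $L$ annihilates it, so $LM=LM_1$. I would then carry out the four $2{\times}2$ block products. Collecting terms (with the simplification $e'e=N-1$ appearing in the bottom-left block), each entry matches, and one obtains
\[
LM \;=\; \begin{bmatrix} I-\tfrac{1}{N}ee' & -\tfrac{1}{N}e \\ -\tfrac{1}{N}e' & \tfrac{N-1}{N}\end{bmatrix}\;=\;I_N-\tfrac{1}{N}\mathbf{1}_N\mathbf{1}_N'.
\]
By the symmetric role played by the rows and columns in both $L$ and $M$, an analogous computation (now using $\mathbf{1}_N'L=0$) yields $ML=I_N-\tfrac{1}{N}\mathbf{1}_N\mathbf{1}_N'$.

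From these two identities the four Penrose conditions drop out: $LM$ and $ML$ are real symmetric, giving conditions (3) and (4); the identity $LML=(LM)L=L-\tfrac{1}{N}\mathbf{1}_N(\mathbf{1}_N'L)=L$ follows from $\mathbf{1}_N'L=0$; and $MLM=M(LM)=M-\tfrac{1}{N}(M\mathbf{1}_N)\mathbf{1}_N'=M$ provided $M\mathbf{1}_N=0$, which I would verify as a small separate computation: the $M_1$ part contributes $-\tfrac{e'B^{-1}e}{N}\mathbf{1}_N$ (the top block collapsing after using $e'e=N-1$), while the rank-one correction contributes $+\tfrac{e'B^{-1}e}{N}\mathbf{1}_N$ since $\mathbf{1}_N'\mathbf{1}_N=N$, and these cancel. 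The main obstacle is simply bookkeeping in the block products---keeping straight how the two projector-like terms $\tfrac{ee'B^{-1}}{N}$ and $\tfrac{B^{-1}ee'}{N}$ interact with the off-diagonal blocks $-Be$ and $-e'B$; once the top-left block of $LM$ is shown to equal $I-\tfrac{1}{N}ee'$, the remaining three blocks follow the same pattern.
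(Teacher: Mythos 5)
Your verification is correct. The paper itself gives no proof of this lemma --- it is imported verbatim from [BBG1, Lemma 3.1] --- so there is nothing internal to compare against, but your argument is the natural one and checks out in every detail: the block form of $L$ follows from $L\mathbf{1}=L'\mathbf{1}=0$, invertibility of $B$ follows from the equal-nonzero-cofactors remark, the four block products do collapse to $LM=ML=I_N-\tfrac{1}{N}\mathbf{1}_N\mathbf{1}_N'$ (I confirmed the bottom-left and bottom-right blocks where $e'e=N-1$ enters), and the identity $M\mathbf{1}_N=0$ needed for $MLM=M$ holds exactly as you describe, with the $-\tfrac{e'B^{-1}e}{N}\mathbf{1}_N$ contribution of the block part cancelling against the rank-one correction. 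The only cosmetic point is that your appeal to ``the symmetric role of rows and columns'' for $ML$ is really just a second block multiplication using the already-established form $L=\bigl[\begin{smallmatrix} B & -Be\\ -e'B & e'Be\end{smallmatrix}\bigr]$, which encodes both $L\mathbf{1}=0$ and $\mathbf{1}'L=0$; stating it that way would make the step self-contained.
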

As an immediate application of the above lemma, the following theorem is obtained in \cite {BBG1}.
\begin{thm}  [ \cite {BBG1}, Theorem 3.1 ] \label{theorem-rij}
Let $D$ be a strongly connected, balanced digraph. 
The resistance distance $r_{ij}^D$ satisfies\\
(i) $r_{ij}^D \geq 0,$ $\forall \ i$ and $j$, $r_{ij}^D = 0 \Leftrightarrow i = j,$
(ii) Triangle inequality : $r_{ij}^D \leq r_{ik}^D + r_{kj}^D$ for any $i, j$ and $k.$
\end{thm}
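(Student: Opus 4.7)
The plan is to apply Lemma~\ref{mainlemma} with a carefully chosen distinguished vertex and then exploit the resulting $M$-matrix structure. For any fixed vertex $k$, after relabeling put $k$ last and set $B=L[\{k\}^c,\{k\}^c]$. A preliminary step is to verify that $B$ is a non-singular $M$-matrix: it inherits the $\mathbb{Z}$-matrix property from $L$, it is non-singular because $\mathrm{rank}(L)=N-1$ forces the principal cofactor $\det(B)$ to be nonzero, and it satisfies $B\mathbf{1}_{N-1}\ge 0$ and $B^{'}\mathbf{1}_{N-1}\ge 0$ (the entries of these vectors being the non-negative multiplicities of directed edges between $V\setminus\{k\}$ and $k$; crucially, the balancedness of $D$ is what makes both row-sum and column-sum vectors non-negative). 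Together with irreducibility inherited from the strong connectivity of $D$, these properties imply $B^{-1}\ge 0$ entry-wise and $(B^{-1})_{ii}>0$ on the diagonal.

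For the triangle inequality (ii), I would take the intermediate vertex $k$ itself as the distinguished vertex. Substituting the block formula for $L^{\dagger}$ from Lemma~\ref{mainlemma} into $r_{ik}^{D}+r_{kj}^{D}-r_{ij}^{D}$, the contributions involving $e^{'}B^{-1}$, $B^{-1}e$, and $e^{'}B^{-1}e$ telescope, leaving the clean identity
\[
r_{ik}^{D}+r_{kj}^{D}-r_{ij}^{D}=2(B^{-1})_{ij}\qquad(i,j\in V\setminus\{k\});
\]
the degenerate cases $i=k$ or $j=k$ are trivial from $r_{kk}^{D}=0$. Since $(B^{-1})_{ij}\ge 0$ by the first paragraph, (ii) follows at once.

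For part (i), $r_{ii}^{D}=0$ is immediate from the definition, so I may assume $i\ne j$ and now take $i$ as the distinguished vertex, with $B=L[\{i\}^c,\{i\}^c]$. A parallel simplification produces the companion pair
\[
r_{ij}^{D}=(B^{-1})_{jj}+\frac{(e^{'}B^{-1})_j-(B^{-1}e)_j}{N},\qquad r_{ji}^{D}=(B^{-1})_{jj}+\frac{(B^{-1}e)_j-(e^{'}B^{-1})_j}{N},
\]
whose sum $r_{ij}^{D}+r_{ji}^{D}=2(B^{-1})_{jj}>0$ already delivers the symmetric version of (i). Individual non-negativity reduces to the single scalar inequality
\[
N\,(B^{-1})_{jj}\ge\bigl|(e^{'}B^{-1})_j-(B^{-1}e)_j\bigr|,
\]
and this is the main obstacle of the proof. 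I expect to deduce it from the identities $B^{-1}(B\mathbf{1}_{N-1})=\mathbf{1}_{N-1}$ and $(B^{'})^{-1}(B^{'}\mathbf{1}_{N-1})=\mathbf{1}_{N-1}$ combined with the non-negativity of $B\mathbf{1}_{N-1}$ and $B^{'}\mathbf{1}_{N-1}$ (where the balancedness of $D$ is used essentially), as these identities quantitatively bound the row- and column-sums of $B^{-1}$ in terms of the dominating diagonal entries of the $M$-matrix $B^{-1}$. The strict positivity of $(B^{-1})_{jj}$ then also supplies the equality case $r_{ij}^{D}=0\iff i=j$.
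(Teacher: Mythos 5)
The paper itself offers no proof of this theorem --- it is quoted from \cite{BBG1} with the remark that it follows from Lemma~\ref{mainlemma} --- so I am judging your argument on its own merits. Your part (ii) is correct and is essentially the intended application of Lemma~\ref{mainlemma}: placing the intermediate vertex $k$ last, the term $\frac{e^{'}B^{-1}e\,11^{'}}{N^2}$ cancels from every $r$, and a direct computation with the remaining block matrix gives $r_{ij}^{D}=c_{ii}+c_{jj}-2c_{ij}+\frac{x_i-y_i+y_j-x_j}{N}$, $r_{ik}^{D}=c_{ii}+\frac{x_i-y_i}{N}$, $r_{kj}^{D}=c_{jj}+\frac{y_j-x_j}{N}$ (with $C=B^{-1}$, $x=Ce$, $y=e^{'}C$), whence $r_{ik}^{D}+r_{kj}^{D}-r_{ij}^{D}=2c_{ij}$ exactly as you claim. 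One caveat: $B$ itself need \emph{not} be irreducible (delete one vertex of a directed cycle and $B$ is triangular), so your route ``irreducible $\mathbb{Z}$-matrix with non-negative row sums'' is not literally available; instead invoke the standard fact that a proper principal submatrix of an irreducible singular $M$-matrix (here $L$) is a non-singular $M$-matrix, which gives $B^{-1}\ge 0$ and $(B^{-1})_{jj}>0$. This is a repair, not a flaw in the conclusion.

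Part (i) has a genuine gap. Your reduction of $r_{ij}^{D}\ge 0$ to $N(B^{-1})_{jj}\ge|(e^{'}B^{-1})_j-(B^{-1}e)_j|$ is correct, but the ingredients you propose --- $C\ge 0$, $Cu=\mathbf{1}$, $v^{'}C=\mathbf{1}^{'}$ with $u=B\mathbf{1}\ge 0$, $v=B^{'}\mathbf{1}\ge 0$ --- do not suffice. They are two linear constraints on a non-negative matrix and are compatible with the inequality failing: take $N-1=3$, $u=v=e_1$, so the constraints only force the first row and first column of $C$ to be all ones; choosing $c_{22}=\varepsilon$ small and $c_{23}=M$ large with $c_{32}=0$ gives $x_2-y_2=M\gg N c_{22}$. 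So whatever proves the inequality must use more of the structure of $C$ as the inverse of a grounded Laplacian. The direction $y_j-x_j\le N c_{jj}$ does follow cheaply from the column-domination property $c_{lj}\le c_{jj}$ of inverse $M$-matrices (since then $y_j\le (N-1)c_{jj}$ and $x_j\ge 0$), but the opposite direction requires an upper bound on the \emph{row} sum $x_j=\sum_l c_{jl}$ in terms of $c_{jj}$, and no such bound holds entrywise ($c_{jl}$ can exceed $c_{jj}$). This is precisely where \cite{BBG1} has to work, and your sketch does not close it; consequently the strict positivity $r_{ij}^{D}>0$ for $i\ne j$ is also open in your write-up, since $r_{ij}^{D}+r_{ji}^{D}=2c_{jj}>0$ only shows that $r_{ij}^{D}$ and $r_{ji}^{D}$ are not both zero.
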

The resistance distance is not a metric in general as it need not satisfy the symmetry property.\\

Let $D=(V,E)$ be a connected, balanced digraph with vertex set $V$ and arc set $E$. Let $\kappa(D, i)$ be the number of spanning trees of $D$ rooted at a vertex $i$ of $D.$ By all minors matrix Theorem \cite{CH}, it follows that
$\kappa(D, i) = \mathrm{det}(L[\{i\}^c, \{i\}^c]).$
Since $\mathrm{rank}(L) = |V| -1$ and $L1 = {L}^{'}1 = 0$, all the cofactors of $L$ are equal. Hence $\kappa(D, i)$ is independent of $i$ and we denote it by $\kappa(D).$ 
\begin{lemma} [\cite{BBG2}, Lemma 3.1] \label{lemma6}
Let $D = (V, E)$ be a strongly connected, balanced digraph with vertex set $V$ and
arc set $E$. Let $|V| = N$ and $i, j \in V$. If $(i,j) \in E$ or $(j, i) \in E,$ then
$\mathrm{det} \left(L[ \{i,j \}^c, \{i, j\}^c ]\right) \leq \kappa(D),$ where $L$ is the Laplacian of $D$ and $\kappa(D)$ is the number of spanning trees of $D$ rooted at $i.$\end{lemma}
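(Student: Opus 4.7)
The plan is to rewrite the target inequality in terms of a single entry of the inverse of $B := L[\{i\}^{c},\{i\}^{c}]$, and then exploit the $Z$-matrix and $M$-matrix structure inherited from $L$. Since $D$ is strongly connected and balanced, $\mathrm{rank}(L)=N-1$, so $B$ is non-singular with $\det B=\kappa(D)>0$. By the cofactor formula for a matrix inverse,
\[
(B^{-1})_{jj}=\frac{\det\bigl(B[\{j\}^{c},\{j\}^{c}]\bigr)}{\det B}=\frac{\det\bigl(L[\{i,j\}^{c},\{i,j\}^{c}]\bigr)}{\kappa(D)},
\]
so the desired bound $\det\bigl(L[\{i,j\}^{c},\{i,j\}^{c}]\bigr)\le\kappa(D)$ is equivalent to $(B^{-1})_{jj}\le 1$.

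Next, I would extract two features of $B$ inherited from $L$. Because $L\mathbf{1}=L^{T}\mathbf{1}=0$ and $L$ is a $Z$-matrix, the $k$-th entries of $B\mathbf{1}$ and $B^{T}\mathbf{1}$ are $-L_{ki}\ge 0$ and $-L_{ik}\ge 0$, respectively. Moreover, $B$ is a non-singular $M$-matrix (since $D$ is strongly connected), so $B^{-1}$ has all entries non-negative. Now split into two cases. If $(j,i)\in E$, then $L_{ji}=-1$, so $(B\mathbf{1})_{j}=1$. Applying $B^{-1}$ to $B\mathbf{1}$ gives $\mathbf{1}=B^{-1}(B\mathbf{1})$, whose $j$-th coordinate reads
\[
1=\sum_{k\ne i}(B^{-1})_{jk}(B\mathbf{1})_{k}\;\ge\;(B^{-1})_{jj}(B\mathbf{1})_{j}=(B^{-1})_{jj},
\]
where the inequality follows from the non-negativity of every factor. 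The case $(i,j)\in E$ is handled symmetrically, using $\mathbf{1}^{T}B\ge 0$ together with $(\mathbf{1}^{T}B)_{j}=-L_{ij}=1$ and the identity $\mathbf{1}^{T}=\mathbf{1}^{T}B\cdot B^{-1}$.

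The one substantive hypothesis driving the argument is the entrywise non-negativity of $B^{-1}$, and this is where I would expect to spend the most care. It is standard $M$-matrix theory for a strongly connected digraph, but it can also be recovered directly via the all-minors matrix-tree theorem, which expresses each signed minor of $L$ as a count of spanning rooted forests of $D$ and is therefore sign-definite. Either route closes the gap and finishes the proof.
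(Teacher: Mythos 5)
Your argument is correct, and it is worth noting that the paper itself offers no proof of this lemma: it is imported verbatim from \cite{BBG2} (Lemma 3.1 there), where it is established combinatorially via the all-minors matrix tree theorem --- $\det\left(L[\{i,j\}^c,\{i,j\}^c]\right)$ is interpreted as a count of two-component spanning forests separating $i$ from $j$, and the arc $(i,j)$ or $(j,i)$ is used to inject these into the spanning trees counted by $\kappa(D)$. Your route is genuinely different: you reduce the inequality to $(B^{-1})_{jj}\le 1$ for $B=L[\{i\}^c,\{i\}^c]$ via the cofactor formula, and then obtain this from $\mathbf{1}=B^{-1}(B\mathbf{1})$ (resp.\ $\mathbf{1}^{T}=(\mathbf{1}^{T}B)B^{-1}$), using that the deleted row/column sums $-L_{ki}$ and $-L_{ik}$ are non-negative with the relevant one equal to $1$ because of the hypothesized arc, together with $B^{-1}\ge 0$. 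All the supporting facts check out in this setting: $\det B=\kappa(D)>0$ by strong connectivity, the column-sum identity $(\mathbf{1}^{T}B)_k=-L_{ik}$ uses balancedness (which is part of the hypothesis), and $B^{-1}\ge 0$ is standard since $B$ is a non-singular $Z$-matrix all of whose principal minors are positive (again by the all-minors theorem and strong connectivity), hence a non-singular $M$-matrix. You correctly flag this last point as the one needing justification and supply two adequate routes to it. What your approach buys is a short, purely linear-algebraic proof that fits naturally with the matrix-partitioning machinery of the present paper (it reuses exactly the matrix $B=L[\{n\}^c,\{n\}^c]$ and its inverse $C$ that appear in the proof of Lemma 3.1 of this paper); what the combinatorial proof buys is an exact interpretation of the ratio $\det\left(L[\{i,j\}^c,\{i,j\}^c]\right)/\kappa(D)$, which is sometimes useful beyond the bare inequality.
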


\begin{lemma} \label{lemma7}Let $A^{\dagger}$ and $C^{\dagger}$ be the Moore-Penrose inverses of $A$ and $C$ respectively.
If $M$ is an $m \times n$ matrix partitioned with $M=\left[  \begin{array}{cc} A &0 \\0&C \end{array} \right]$  then $M^{\dagger} = 
\left[  \begin{array}{cc} A^{\dagger} &0 \\0&C^{\dagger} \end{array} \right]$.
\end{lemma}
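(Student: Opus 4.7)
The plan is to prove this by direct verification of the four Moore-Penrose axioms, using the fact that the Moore-Penrose inverse is uniquely characterized by those four conditions (which is stated earlier in the paper when the definition of $A^{\dagger}$ is recalled). So I set
\[
N \;=\; \begin{bmatrix} A^{\dagger} & 0 \\ 0 & C^{\dagger} \end{bmatrix},
\]
and my goal reduces to checking that $N$ satisfies $MNM=M$, $NMN=N$, $(MN)^{*}=MN$, and $(NM)^{*}=NM$. Once all four hold, uniqueness of the Moore-Penrose inverse forces $M^{\dagger}=N$.

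The first step is to compute the two block products that appear in all four axioms. By straightforward block multiplication,
\[
MN \;=\; \begin{bmatrix} AA^{\dagger} & 0 \\ 0 & CC^{\dagger} \end{bmatrix}, \qquad NM \;=\; \begin{bmatrix} A^{\dagger}A & 0 \\ 0 & C^{\dagger}C \end{bmatrix}.
\]
Both $MN$ and $NM$ are block-diagonal, and the diagonal blocks are precisely the products that are Hermitian by the hypothesis that $A^{\dagger}$ and $C^{\dagger}$ are the Moore-Penrose inverses of $A$ and $C$ respectively. This immediately delivers the two symmetry axioms $(MN)^{*}=MN$ and $(NM)^{*}=NM$, since the conjugate transpose of a block-diagonal matrix is block-diagonal with each diagonal block conjugate-transposed.

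For the remaining two axioms, I multiply once more. Block multiplication gives
\[
MNM \;=\; \begin{bmatrix} AA^{\dagger}A & 0 \\ 0 & CC^{\dagger}C \end{bmatrix} \;=\; \begin{bmatrix} A & 0 \\ 0 & C \end{bmatrix} \;=\; M,
\]
and similarly
\[
NMN \;=\; \begin{bmatrix} A^{\dagger}AA^{\dagger} & 0 \\ 0 & C^{\dagger}CC^{\dagger} \end{bmatrix} \;=\; \begin{bmatrix} A^{\dagger} & 0 \\ 0 & C^{\dagger} \end{bmatrix} \;=\; N,
\]
using the defining identities $AA^{\dagger}A=A$, $A^{\dagger}AA^{\dagger}=A^{\dagger}$, and the corresponding identities for $C$.

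There is no real obstacle here; the statement is a structural fact about block-diagonal matrices, and the proof is a four-line calculation. The only thing to be attentive to is that the zero off-diagonal blocks have the correct dimensions so that every block multiplication above is legal, but this is forced by compatibility of $M$ as a product: if $A$ is $p\times q$ and $C$ is $r\times s$, then the upper-right zero block is $p\times s$ and the lower-left zero block is $r\times q$, while in $N$ the off-diagonal zero blocks are $q\times r$ and $s\times p$, making every product above well-defined. With all four axioms verified and the Moore-Penrose inverse being unique, we conclude $M^{\dagger}=N$, which is exactly the claim.
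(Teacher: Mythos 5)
Your proof is correct and follows essentially the same approach as the paper's: verify all four Moore--Penrose identities for the block-diagonal candidate by block multiplication and invoke uniqueness of the Moore--Penrose inverse. Your additional remark on the dimensions of the off-diagonal zero blocks is a small refinement the paper omits, but the argument is otherwise identical.
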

\begin{proof} By definition, $A^{\dagger} A A^{\dagger} = A^{\dagger}, A A^{\dagger} A =A,
(AA^{\dagger})^*=AA^{\dagger},  (A^{\dagger}A)^*=A^{\dagger}A$,
 and $C^{\dagger} C C^{\dagger} = C^{\dagger}, CC^{\dagger} C =C,
(CC^{\dagger})^*=CC^{\dagger},  (C^{\dagger}C)^*=C^{\dagger}C$.  Now

\[
\left[  \begin{array}{cc} A^{\dagger} &0 \\0&C^{\dagger} \end{array} \right]
\left[  \begin{array}{cc} A &0 \\0&C \end{array} \right]
\left[  \begin{array}{cc} A^{\dagger} &0 \\0&C^{\dagger} \end{array} \right]
= 
\left[  \begin{array}{cc} A^{\dagger} A A^{\dagger}&0 \\0&C^{\dagger} CC^{\dagger}\end{array} \right]
=
\left[  \begin{array}{cc} A^{\dagger} &0 \\0&C^{\dagger} \end{array} \right] 
\]

\[ 
\left[  \begin{array}{cc} A &0 \\0&C \end{array} \right]
\left[  \begin{array}{cc} A^{\dagger} &0 \\0&C^{\dagger} \end{array} \right]
\left[  \begin{array}{cc} A &0 \\0&C \end{array} \right]
=
\left[  \begin{array}{cc} AA^{\dagger}A &0 \\0&CC^{\dagger}C \end{array} \right]
=
\left[  \begin{array}{cc} A &0 \\0&C \end{array} \right]
\]

\[
\left( \left[  \begin{array}{cc} A &0 \\0&C \end{array} \right] \left[  \begin{array}{cc} A^{\dagger} &0 \\0&C^{\dagger} \end{array} \right] \right)^*
= \left[  \begin{array}{cc} AA^{\dagger} &0 \\0&C C^{\dagger} \end{array} \right]^*
= \left[  \begin{array}{cc} (AA^{\dagger} )^*&0 \\0& (C C^{\dagger})^* \end{array} \right] \]
\[ = \left[  \begin{array}{cc} AA^{\dagger} &0 \\0&C C^{\dagger} \end{array} \right]
= \left[  \begin{array}{cc} A &0 \\0&C \end{array} \right] \left[  \begin{array}{cc} A^{\dagger} &0 \\0&C^{\dagger} \end{array} \right]
\]

\[ \mathrm{Similarly}
\left( \left[  \begin{array}{cc} A^{\dagger} &0 \\0&C^{\dagger} \end{array} \right]  \left[  \begin{array}{cc} A &0 \\0&C \end{array} \right] \right)^*
= \left[  \begin{array}{cc} A^{\dagger} &0 \\0&C^{\dagger} \end{array} \right]  \left[  \begin{array}{cc} A &0 \\0&C \end{array} \right]  
\]
Hence $M^{\dagger} = 
\left[  \begin{array}{cc} A^{\dagger} &0 \\0&C^{\dagger} \end{array} \right]$ by definition.

\end{proof}
\begin{lemma}\label{sublemma}
Let $D = (V,E) $ be a strongly connected balanced digraph. 
Suppose $\forall$ $(i,j) \in E $, $r_{ij}^{D} \leq 1$, then Conjecture 
\ref{conj} is true for $D$. 
\end{lemma}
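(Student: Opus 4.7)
The plan is to combine two facts already available in the paper: the triangle inequality for resistance distance (part (ii) of Theorem \ref{theorem-rij}) and the hypothesis that $r_{ij}^{D} \le 1$ on every arc. Both facts are stated for exactly the class of digraphs we are working with (strongly connected, balanced), so no additional machinery is needed.

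Concretely, I would fix an arbitrary pair of vertices $i,j$ and consider a shortest directed path from $i$ to $j$ in $D$, say
$$i = v_0 \to v_1 \to v_2 \to \cdots \to v_k = j, \qquad k = d_{ij}^{D}.$$
Each consecutive pair $(v_{t-1}, v_t)$ is an arc of $D$, so by the hypothesis of the lemma we have $r_{v_{t-1}v_t}^{D} \le 1$ for every $t = 1,\dots,k$. Iterating the triangle inequality from Theorem \ref{theorem-rij}(ii) along this path gives
$$r_{ij}^{D} \;\le\; \sum_{t=1}^{k} r_{v_{t-1}v_t}^{D} \;\le\; \sum_{t=1}^{k} 1 \;=\; k \;=\; d_{ij}^{D},$$
which is precisely Conjecture \ref{conj} for $D$.

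There is essentially no obstacle here: the only things to verify are that the chosen path actually exists (guaranteed by strong connectedness, which is automatic for connected balanced digraphs) and that the triangle inequality may be iterated, which is a trivial induction on $k$. If $i = j$ the conclusion is immediate from Theorem \ref{theorem-rij}(i). Thus the lemma reduces the conjecture to the per-arc inequality, and this is exactly the form that will be exploited in the proof of Theorem \ref{mainthm}.
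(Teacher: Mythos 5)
Your proposal is correct and is essentially identical to the paper's own proof: both take a shortest directed path, bound each arc's resistance distance by $1$ via the hypothesis, and iterate the triangle inequality from Theorem \ref{theorem-rij}(ii). The only (harmless) addition is your explicit treatment of the case $i=j$, which the paper sidesteps by assuming $i$ and $j$ are distinct.
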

\begin{proof}
Suppose $i$ and $j$ are two distinct vertices of the digraph $D$. Let
a shortest path from $i$ to $j$ be given by
$i = i_0 \rightarrow i_1 \rightarrow .......\rightarrow i_{k-1} \rightarrow i_{k} = j$.
The length of the path is $k = d_{ij}^D$.
We have $r_{i_{l-1}i_{l}}^D \leq 1 \ \forall \  l = 1 \ \mathrm{to} \ k$.
By repeatedly using the triangle inequality [Theorem \ref{theorem-rij}, (ii)], we get
$r_{ij}^D \leq r_{i i_1}^D + r_{i_1i_2}^D + .....+r_{i_{k-1}j}^D
\leq 1 + 1 + ....+1$ (upto $k$ times).
Hence $r_{ij}^D \leq d_{ij}^D $.
\end{proof}
\begin{lemma} [\cite{BBG2}, Lemma 3.2, case(i)]\label{BBG lemma}
Let $D=(V,E)$ be a strongly connected, balanced digraph. 
Let $(i,j) \in E$. If both $i$ and $j$ are of indegree one, then
$r_{ij}^D \leq 1.$
\end{lemma}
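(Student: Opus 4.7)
The plan is to apply Lemma~\ref{mainlemma} directly, with the vertex $j$ placed last (in position $N$). Writing $B=L[\{j\}^c,\{j\}^c]$ and $e=\mathbf{1}_{N-1}$, the formula of Lemma~\ref{mainlemma} gives, after substituting into $l_{ii}^{\dagger}+l_{jj}^{\dagger}-2\,l_{ij}^{\dagger}$ and observing that the common constant $(e^{T}B^{-1}e)/N^{2}$ cancels out, the compact identity
\[
r_{ij}^{D} \;=\; (B^{-1})_{ii} \;+\; \frac{R_i - C_i}{N},
\]
where $R_i = e_i^{T}B^{-1}e$ is the sum of row $i$ of $B^{-1}$ and $C_i = e^{T}B^{-1}e_i$ is the sum of column $i$ of $B^{-1}$.

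Next I would read off the two structural consequences of the hypotheses. Since $j$ has a unique in-neighbour, namely $i$, the column $L[\{j\}^c,j]$ equals $-e_i$; combined with $L\mathbf{1}=0$ this forces $B\mathbf{1}=e_i$, so column $i$ of $B^{-1}$ is $\mathbf{1}$. In particular $(B^{-1})_{ii}=1$ and $C_i=N-1$. Since $i$ has indegree one and $D$ is balanced, $i$ also has outdegree one, and because $(i,j)\in E$ the unique out-arc from $i$ is $(i,j)$. Hence row $i$ of $L$ equals $e_i^{T}-e_j^{T}$, and after deleting column $j$ the $i$-th row of $B$ becomes $e_i^{T}$. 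Multiplying on the right by $B^{-1}$ gives that row $i$ of $B^{-1}$ is also $e_i^{T}$, so $R_i=1$.

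Substituting these three values into the displayed formula,
\[
r_{ij}^{D} \;=\; 1 + \frac{1-(N-1)}{N} \;=\; \frac{2}{N},
\]
and since $(i,j)\in E$ forces $N\ge 2$, we conclude $r_{ij}^{D}\le 1$, with equality precisely in the two-cycle case $N=2$.

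I do not foresee a significant obstacle: the substantive work is just the algebraic bookkeeping to extract the compact formula from Lemma~\ref{mainlemma}, and the conceptual content is that each indegree-one hypothesis produces one of the rank-one identities $B\mathbf{1}=e_i$ (from $j$) and $e_i^{T}B=e_i^{T}$ (from $i$), which together pin down $(B^{-1})_{ii}$, $R_i$, and $C_i$ completely.
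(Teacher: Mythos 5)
Your proof is correct, and it takes a genuinely different route from the source: the paper itself offers no proof of Lemma \ref{BBG lemma}, importing it from \cite{BBG2}, where (as the introduction notes) the argument is combinatorial, counting spanning forests with prescribed properties. Your argument instead runs entirely through the partition formula of Lemma \ref{mainlemma} --- the same machinery the paper deploys in the proof of Lemma \ref{prop1}; indeed your identity $r_{ij}^{D}=(B^{-1})_{ii}+\frac{R_i-C_i}{N}$ is precisely the paper's equation \eqref{equation1} with $j$ placed in the last position. Both structural steps check out: indegree one at $j$ together with $L\mathbf{1}=0$ gives $Be=e_i$, hence column $i$ of $B^{-1}$ is $\mathbf{1}$, so $(B^{-1})_{ii}=1$ and $C_i=N-1$; indegree one at $i$ plus balancedness gives outdegree one at $i$, so row $i$ of $B$ is $e_i^{T}$, hence row $i$ of $B^{-1}$ is $e_i^{T}$ and $R_i=1$. (Invertibility of $B$ is guaranteed because $\mathrm{rank}(L)=N-1$ with zero row and column sums forces every principal $(N-1)\times(N-1)$ minor to be nonzero, as recorded in the paper's preliminaries; and $i\neq j$ since digraphs here are loopless, so $i$ really is the unique in-neighbour of $j$ and $(i,j)$ the unique out-arc of $i$.) What your approach buys is strictly more than the lemma asserts: you obtain the exact value $r_{ij}^{D}=2/N$ rather than the bound $r_{ij}^{D}\leq 1$, which one can confirm on the directed triangle ($r_{ij}=2/3$) and which degenerates to equality only for the $2$-cycle. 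This linear-algebraic proof is arguably more in the spirit of the present paper than the combinatorial one it cites, and would make the paper self-contained on this point.
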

\begin{lemma} [\cite{BBG1}, Theorem 5.2, (ii)] \label{BBG2 lemma} 
Let $D$ be a strongly connected balanced digraph with vertex set $\{ 1,2,...,n \},$ Laplacian matrix $L$ and
Resistance matrix $R.$ Then for each distinct pair $i,j \in \{1,2,...,n\},$ $$r_{ij}^{D} + r_{ji}^D = 2 \frac{\mathrm{det}L[ \{i,j\}^c, \{i,j\}^c]} {\kappa(D)}.$$
    \end{lemma}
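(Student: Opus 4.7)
The plan is to reduce the general case to the case $j=n$ by relabeling vertices, apply Lemma \ref{mainlemma} to obtain explicit formulas for the four relevant entries of $L^{\dagger}$, and identify the surviving quantity as the stated ratio of determinants.

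First, because resistance distance is intrinsic to the pair of vertices and $\kappa(D)$, $\det L[\{i,j\}^c,\{i,j\}^c]$ transform covariantly under permutation of vertices (conjugation of $L$ by a permutation matrix), we lose no generality by relabeling $j$ to be vertex $n$. After this relabeling, Lemma \ref{mainlemma} applies directly: $L$ decomposes as
\[
L=\begin{bmatrix} B & -Be\\ -e'B & e'Be\end{bmatrix},\qquad B=L[\{n\}^c,\{n\}^c],
\]
with the explicit formula for $L^{\dagger}$ recorded there.

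Second, the definition of resistance distance gives
\[
r_{in}^{D}+r_{ni}^{D}=2\bigl(l_{ii}^{\dagger}+l_{nn}^{\dagger}-l_{in}^{\dagger}-l_{ni}^{\dagger}\bigr).
\]
Reading off the four entries from Lemma \ref{mainlemma}, the term $l_{nn}^{\dagger}$ consists only of the constant $e'B^{-1}e/n^{2}$; the off-diagonal entries $l_{in}^{\dagger}$ and $l_{ni}^{\dagger}$ contribute $-(B^{-1}e)_{i}/n$ and $-(e'B^{-1})_{i}/n$ respectively plus the same constant; and $l_{ii}^{\dagger}$ contains $(B^{-1})_{ii}$, both marginal terms, and the constant. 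When these are combined with the signs $(+,+,-,-)$, the four copies of $e'B^{-1}e/n^{2}$ cancel in pairs, and the marginal terms $(B^{-1}e)_{i}/n$ and $(e'B^{-1})_{i}/n$ also cancel, leaving the strikingly clean identity
\[
r_{in}^{D}+r_{ni}^{D}=2\,(B^{-1})_{ii}.
\]

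Third, by the all minors matrix-tree theorem (as noted in the excerpt) $\det B=\kappa(D)$, and Cramer's rule gives
\[
(B^{-1})_{ii}=\frac{\det B[\{i\}^c,\{i\}^c]}{\det B}=\frac{\det L[\{i,n\}^c,\{i,n\}^c]}{\kappa(D)}.
\]
Substituting yields the claim in the case $j=n$; undoing the relabeling proves it for arbitrary $i\neq j$.

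The only real obstacle is the bookkeeping in the second step: each of the four entries of $L^{\dagger}$ carries three contributions (a principal term from $B^{-1}$, two marginal terms involving $B^{-1}e$ and $e'B^{-1}$, and the rank-one correction $e'B^{-1}e/n^{2}$), and one must track signs carefully to see that precisely $(B^{-1})_{ii}$ survives. Once this cancellation is verified, the identification with $\det L[\{i,j\}^c,\{i,j\}^c]/\kappa(D)$ is purely formal.
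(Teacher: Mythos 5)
Your proof is correct. Note first that the paper does not actually prove this lemma: it is imported verbatim from \cite{BBG1} (Theorem 5.2(ii)), so there is no in-paper argument to compare against. Your derivation is a legitimate self-contained proof, and the key move is the reduction to the case $j=n$ by relabeling, which is valid because the Moore--Penrose inverse commutes with conjugation by a permutation matrix (permutations are orthogonal), so $r_{ij}^{D}$, $\kappa(D)$ and $\det L[\{i,j\}^c,\{i,j\}^c]$ all transform consistently. With $j=n$ the bookkeeping you describe does check out: the four copies of $e'B^{-1}e/n^2$ cancel with signs $(+,+,-,-)$, the row-sum term $(B^{-1}e)_i/n$ in $l_{ii}^{\dagger}$ cancels against $-l_{in}^{\dagger}$, the column-sum term $(e'B^{-1})_i/n$ cancels against $-l_{ni}^{\dagger}$, and only $(B^{-1})_{ii}$ survives; Cramer's rule and $\det B=\kappa(D)$ (matrix-tree, with $B$ invertible since $\operatorname{rank} L=n-1$) finish it. It is worth pointing out that your relabeling trick buys you something real: the paper's own proof of its Lemma 3.1 works with a fixed deleted vertex $n$ distinct from both $i$ and $j$, arrives at $r_{ij}^{D}+r_{ji}^{D}=2(c_{ii}+c_{jj}-c_{ij}-c_{ji})$, and then must invoke the cited lemma to identify that combination with the determinant ratio --- establishing that identification directly would require a Jacobi-type identity for $2\times 2$ corners of $B^{-1}$, which your choice $j=n$ avoids entirely.
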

\begin{remark}\label{note}
Let $D$ be a strongly connected digraph and $D_1,$ a strongly connected subdigraph of $D$. Let $(i,j)$ be an arc in $D_1$. Then $r_{ij}^{D_1}$ need not be equal to $ r_{ij}^{D}$.\end{remark}
\begin{example} Consider $D$ and $D_1$ as in Figures 4.1 and 4.2. We have $r_{13}^D =  0.8125 + 0.4375 - 0.625 = 0.625 $ and
$r_{13}^{D_1} = 0.6389 + 0.3056 - 0.2778 = 0.6667.$
Hence $r_{13}^D \neq r_{13}^{D_1}$.
Note that $r_{13}^D$ (resp. $r_{13}^{D_1}$) is evaluated via the Moore-Penrose inverse of the Laplacian of $D$ (resp. $D_1$).
\end{example}
\begin{example}
Consider the strongly connected graph $D$ as in Figure 4.4. We have $r_{31}^D > d_{31}^D.$ This is a counterexample which shows that the condition 
``balanced" in Conjecture \ref{conj} cannot be removed.
\end{example}
\section{Proof of Theorem \ref{mainthm}}
\begin{lemma}\label{prop1}
Let $D_1, D_2 $ be connected balanced arc-disjoint digraphs and let $D_1 \cap D_2  $ be a single vertex. Suppose Conjecture \ref{conj} is true for both $D_1$ and $D_2$, then it is true for $D = D_1 \cup D_2$. In other words, 
$r_{ij}^{D} \leq 1$ for all arcs $(i,j) \in E(D).$
\end{lemma}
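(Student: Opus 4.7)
I would begin by partitioning the Laplacian $L$ of $D$ according to the decomposition $V(D) = V_1' \sqcup V_2' \sqcup \{v\}$, where $v$ is the shared vertex and $V_m' := V(D_m) \setminus \{v\}$. Since $D_1$ and $D_2$ are arc-disjoint and meet only at $v$, the Laplacian takes the block form
\[
L \;=\; \begin{pmatrix} A_1 & 0 & b_1 \\ 0 & A_2 & b_2 \\ c_1 & c_2 & d_1+d_2 \end{pmatrix},
\]
where $L_m = \begin{pmatrix} A_m & b_m \\ c_m & d_m \end{pmatrix}$ is the Laplacian of $D_m$ with $v$ placed last, and $A_m := L_m[\{v\}^c,\{v\}^c]$. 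Taking $v$ as the distinguished last vertex in Lemma~\ref{mainlemma}, the matrix $B$ of that lemma is block-diagonal, so $B^{-1} = \mathrm{diag}(A_1^{-1}, A_2^{-1})$, and plugging into the closed-form for $L^\dagger$ expresses every entry of $L^\dagger$ in terms of the two smaller inverses.

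By Lemma~\ref{sublemma} it suffices to verify $r_{ij}^D \leq 1$ for every arc $(i,j)\in E(D)$; by symmetry I may assume $(i,j)\in E(D_1)$. Substituting the block-diagonal $B^{-1}$ into the formula of Lemma~\ref{mainlemma} and simplifying, a clean identity emerges: writing $n_m = |V(D_m)|$, $N = n_1+n_2-1$, and $f(k) := (A_1^{-1}e_1)_k - (e_1'A_1^{-1})_k$ (the difference of the $k$-th row and column sums of $A_1^{-1}$), one obtains for $i,j\in V_1'$
\[
r_{ij}^D \;=\; r_{ij}^{D_1} + \Bigl(\tfrac{1}{N}-\tfrac{1}{n_1}\Bigr)\bigl(f(i)-f(j)\bigr),
\]
and for the endpoint cases $r_{iv}^D = (A_1^{-1})_{ii}+f(i)/N$ and $r_{vj}^D = (A_1^{-1})_{jj}-f(j)/N$. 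A further key identity comes from a Schur-complement computation: since $L_2$ is singular the relation $d_2 = c_2 A_2^{-1} b_2$ holds, which forces $\det L[\{i,j\}^c,\{i,j\}^c] = \kappa(D_2)\,\det L_1[\{i,j\}^c,\{i,j\}^c]$, and together with $\kappa(D) = \kappa(D_1)\kappa(D_2)$ and Lemma~\ref{BBG2 lemma} this gives the conservation law $r_{ij}^D + r_{ji}^D = r_{ij}^{D_1} + r_{ji}^{D_1}$.

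Since $\tfrac{1}{N} < \tfrac{1}{n_1}$, whenever the correction in the comparison identity is nonpositive the hypothesis $r_{ij}^{D_1}\leq d_{ij}^{D_1}=1$ immediately forces $r_{ij}^D \leq r_{ij}^{D_1}\leq 1$. For the endpoint arcs I would exploit that $A_1$, being a principal submatrix of the Laplacian of the strongly connected digraph $D_1$, is a nonsingular $M$-matrix, so $A_1^{-1}\geq 0$; and the balanced condition gives $A_1 e_1 = \mathbf{1}_{S^+}$ with $S^+ := \{k : (k,v)\in E(D_1)\}$, so $\sum_{k\in S^+}(A_1^{-1})_{\ell k}=1$ for every $\ell$, forcing $(A_1^{-1})_{\ell\ell}\leq 1$ whenever $\ell\in S^+$, and symmetrically via column sums for $S^- := \{k : (v,k)\in E(D_1)\}$. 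A quick case split on the sign of $f$ in the endpoint formulas then yields $r_{iv}^D, r_{vj}^D \leq 1$.

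The main obstacle is the internal sub-case $(i,j)\in E(D_1)$ with $i,j\in V_1'$ and \emph{positive} correction, where $r_{ij}^D > r_{ij}^{D_1}$ and the $M$-matrix diagonal bound above is unavailable. In this situation I would combine the conservation law with Lemma~\ref{lemma6} applied inside $D_1$, which supplies $r_{ij}^{D_1}+r_{ji}^{D_1}\leq 2$, and use the sign trade-off between the $(i,j)$- and $(j,i)$-corrections (equal in magnitude, opposite in sign) to squeeze $r_{ij}^D$ below~$1$. The case $(i,j)\in E(D_2)$ is handled by swapping the roles of $D_1$ and $D_2$ throughout.
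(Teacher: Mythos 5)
Your setup coincides with the paper's: you place the cut vertex last, observe that $B=L[\{v\}^c,\{v\}^c]$ is block diagonal, feed $B^{-1}=\mathrm{diag}(A_1^{-1},A_2^{-1})$ into Lemma~\ref{mainlemma}, and reduce to arcs via Lemma~\ref{sublemma}. Your comparison identity $r_{ij}^{D}=r_{ij}^{D_1}+(\tfrac1N-\tfrac1{n_1})(f(i)-f(j))$ is exactly the content of \eqref{equation3}--\eqref{equation4}, equivalent to the paper's convex-combination form $r_{ij}^D=\frac{n_1 r_{ij}^{D_1}+k\,g_{ij}}{n_1+k}$ with $g_{ij}=c_{ii}+c_{jj}-2c_{ij}$. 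Your handling of the endpoint arcs and of internal arcs with nonpositive correction is sound; your $M$-matrix bound $(A_1^{-1})_{ii}\le 1$ for $i$ with $(i,v)\in E(D_1)$ is a legitimate (and slightly different) replacement for the paper's appeal to Lemma~\ref{lemma6} at that point.

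The gap is precisely in the case you flag as the main obstacle, and your proposed resolution does not close it. The conservation law $r_{ij}^D+r_{ji}^D=r_{ij}^{D_1}+r_{ji}^{D_1}$ combined with Lemma~\ref{lemma6} (inside $D_1$ or inside $D$ --- they give the same thing here) yields only $r_{ij}^D+r_{ji}^D\le 2$, and the only lower bound available for $r_{ji}^D$ is $0$ from Theorem~\ref{theorem-rij}; so the squeeze terminates at $r_{ij}^D\le 2$, not $\le 1$. The ``equal magnitude, opposite sign'' observation adds no new information, since it is exactly what produces the conservation law in the first place. What the argument actually requires is the one-sided bound $g_{ij}\le 1$, whereas the sum bound controls only $g_{ij}+g_{ji}=2(c_{ii}+c_{jj}-c_{ij}-c_{ji})\le 2$; these differ by $2(c_{ji}-c_{ij})$, and $A_1^{-1}$ is not symmetric for a general balanced digraph, so the sum bound cannot be split without further input. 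The paper closes this case by asserting $c_{ij}=c_{ji}$ (via a transpose-of-determinant computation of the cofactors), which is exactly the missing ingredient that converts $g_{ij}+g_{ji}\le 2$ into $g_{ij}\le 1$; your proposal contains no substitute for that step, so the positive-correction internal case remains open.
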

\begin{proof} Let the vertex sets of $D_1$ and $D_2$ be
 $V(D_1) = \{ 1, 2, ...., n-1, n \}$ and $V(D_2) = \{n,  n +1, ...., n + k \}$.
 Let $D = D_1 \cup D_2$. The vertex set $V$ of $D = D_1 \cup D_2$ is $\{1, 2,.., , n+k \}$ and
$V(D_1) \cap V(D_2) = \{ n \}$. Note that $|V_1|=n, |V_2| = k+1$.
Let $N = n + k$. Then $|V(D)| = N$. The digraphs $D_1$ and $D_2$ are both strongly connected and balanced. Hence $D$ is also strongly connected and balanced. Let $(i,j)$ be an arc.
We have two cases. 
Case (i) $D_1 \cap D_2 =  \{ i \} \ \mathrm{or} \  \{ j \}$, 
Case (ii) $D_1  \cap D_2 \neq \{ i \}$ and $\neq
\{ j \}.$\\
Suppose Case (i) holds. Let $(i,j)$ be an arc and $i = n$ or $j = n$. Then either $\{ i , j \} \subseteq V(D_1) \ \text{or} \ \{ i , j \} \subseteq  \ V(D_2)$.
 Without loss of generality, let $\{ i , j \} \subseteq V(D_1)$ and $i = 1,$ $j=n$. The digraph $D$ is balanced and hence $L = L(D)$ satisfies the condition that each row and each column adds up to zero and $\mathrm{rank}(L)=N-1$ and $L$ is a $\mathbb{Z}$-matrix (see reference \cite{BBG2}, section 2 (P7)). By Lemma \ref{mainlemma},
$$L^{\dagger}  =
\begin{bmatrix}  B^{-1} - \frac{ e e^{'} B^{-1} } {N}  - \frac{ B^{-1} e e^{'} }{ N }   &   - \frac{ B^{-1} e } {N} \\
  -\frac{ e^{'} B^{-1} } { N }  & 0  
\end{bmatrix} +   \frac{ e^{'} B^{-1} e 1 1^{'} }{ N^2 },   $$
where $B = L [ {\{ n \} }^c, {\{ n \} ^c} ]$. Let $J$ be the 
$(N-1) \times (N-1)$ all $1$-matrix. 
$\mathrm{Set} \ B^{-1} = C = (c_{ij}), \\ y =  e^{'} C = (y_i),  \ x = Ce = (x_i),$ $\frac{ e^{'} B^{-1} e 1 1^{'} }{ N^2 } = x_0 J,$ for some rational number $x_0.$
We then have 
$$  - \frac{ e e^{'} B^{-1} } {N} = -\frac{1}{N} \begin{bmatrix}
 y_1 & y_2 & \cdot & \cdot & \cdot & y_{N-1}\\
y_1 & y_2 & \cdot & \cdot & \cdot & y_{N-1}\\
\cdot & \cdot & \cdot  & \cdot & \cdot & \cdot \\
\cdot & \cdot & \cdot  & \cdot & \cdot & \cdot \\
y_1 & y_2 & \cdot & \cdot & \cdot & y_{N-1}
\end{bmatrix}, \  - \frac{ B^{-1} e e^{'} }{ N }  = -\frac{1}{N} \begin{bmatrix}  x_1 & x_1 & \cdot & \cdot & \cdot & x_1\\
x_2 & x_2 & \cdot & \cdot & \cdot & x_2\\
\cdot & \cdot & \cdot  & \cdot & \cdot & \cdot \\
\cdot & \cdot & \cdot  & \cdot & \cdot & \cdot \\
x_{N-1} & x_{N-1} & \cdot & \cdot & \cdot & x_{N-1}
\end{bmatrix},$$ where 
$x_i = \sum_{l=1}^{N-1} c_{il}, \ y_i = \sum_{l=1}^{N-1} c_{li},$ for all $i=1$
 to $N-1.$

Consider
$$r_{1n}^{D} = l_{11}^{\dagger} + l_{nn}^{\dagger} - 2 l_{1n}^{\dagger}
= c_{11} -  \frac{y_1}{N} + x_0 - \frac{x_1}{N}  + x_0 - ( - \frac{2}{N} x_1 + 2x_0 ) = c_{11} - \frac{y_1}{N}+ \frac{x_1}{N}.$$
Thus
\begin{equation} r_{1n}^{D} =   c_{11} + \frac{(x_1 - y_1)}{N}.\label{equation1}
\end{equation}\\
\smallskip
By Lemma \ref{lemma7}, the Moore Penrose inverse of  
$B=L [ {\{ n \} }^c, {\{ n \} ^c} ] =
  \begin{bmatrix} { L_1[ {\{ n \} }^c, {\{ n \} ^c} ] } & 0 \\
0 & { L_2 [ {\{ n \} }^c, {\{ n \} ^c} ] } \end{bmatrix}$ is given by\\
 $$B^{-1} ={ L [ {\{ n \} }^c, {\{ n \} ^c} ] }^{-1} = 
 \begin{bmatrix} { L_1[ {\{ n \} }^c, {\{ n \} ^c} ] }^{-1} & 0 \\
0 & { L_2 [ {\{ n \} }^c, {\{ n \} ^c} ] }^{-1} \end{bmatrix}.$$

\smallskip

By Lemma \ref{mainlemma},
we know that
 $$L_1^{\dagger}  =
=
\begin{bmatrix}  B_1^{-1} - \frac{ e e^{'} B_1^{-1} } {n}  - \frac{ B_1^{-1} e e^{'} }{ n }   &   - \frac{ B_1^{-1} e } {n} \\
  -\frac{ e^{'} B_1^{-1} } { n }  & 0 
\end{bmatrix} +   \frac{ e^{'} B_1^{-1} e 1 1^{'} }{ n^2 },  \mathrm{where} \ B_1 = L_1 [ {\{ n \} }^c, {\{ n \} ^c} ] . $$
 $$\mathrm{Set} \ B_1^{-1} = C^{*} = (c^{*}_{ij}), \ y^{*} =  e^{'} C^{*} = (y_i^{*}),  \ x^{*} = C^{*}e = (x_i^{*} ). \
\mathrm{We} \ \mathrm{ get} \ r_{1n}^{D_1} =   c^{*}_{11}+ \frac{(x_1^{*} - y_1^{*})}{n_1}.$$
From $B^{-1} =
 \begin{bmatrix} B_1^{-1} & 0 \\
0 & { L_2 [ {\{ n \} }^c, {\{ n \} ^c} ] }^{-1} \end{bmatrix},$ we get 
$c_{11} = c_{11}^{*}, \ y_1 = y_1^{*}, \ x_1 = x_1^{*}.$ Thus
\begin{equation}\label{equation2}
 r_{1n}^{D_1} =   c_{11}+ \frac{(x_1 - y_1)}{n}.
\end{equation}

By combining equations \ref{equation1} and \ref{equation2}, we get, as $N = n+ k$,
\begin{equation}\label{equation7}
r_{1n}^{D} =  c_{11} + \frac{n}{N} ( r_{1n}^{D_1} - c_{11} )  = \frac{  (n r_{1n}^{D_1} + k c_{11} )} {n+k} .
\end{equation}
Now, by Lemma \ref{lemma6}, 
\begin{equation}\label{equation8} c_{11} = \frac{ \mathrm{det} L[\{1,n\}^c, \{1,n \}^c] } {\kappa(D)}  \leq 1.
\end{equation}
Conjecture \ref{conj} is true for $D_1,$ so 
\begin{equation}\label{equation9} r_{1n}^{D_1} \leq 1.
\end{equation} 
We conclude from equations \ref{equation7}, \ref{equation8} and \ref{equation9} that
$
 r_{1n}^{D} \leq \frac{ n+ k}{n+k}  =1.$
 
Similarly,  when 
 $ \{ i , j \} \subseteq V(D_1),$  $(i,j) \in E(D_1)$ and $j = 1,$ $i=n,$ by using Lemma \ref{lemma6} we get
  $c_{11}   \leq 1.$ Hence $r_{n1}^{D} = \frac{  (n r_{n1}^{D_1} + k c_{11} )} {n+k}  \leq 1.$
So $r_{ij}^{D} \leq 1$ if either $D_1 \cap D_2 = \{ i \} \ \mathrm{or} \{ j\}.$\\
Suppose Case (ii) holds. i.e. $D_1  \cap D_2 \neq \{ i \}$ and $\neq
\{ j \}.$\\Without loss of generality, let $\{ i , j \} \subseteq V(D_1)$. We have $i, j \neq n$. Proceeding as before, we get
\begin{equation}\label{equation3}
r_{ij}^{D} = l_{ii}^{\dagger} + l_{jj}^{\dagger} - 2 l_{ij}^{\dagger}
= c_{ii} +  c_{jj} - 2  c_{ij} +  \frac{(x_i - y_i + y_j - x_j)}{N}
\end{equation} and
\begin{equation}\label{equation4}
r_{ij}^{D_1} = l_{ii}^{\dagger} + l_{jj}^{\dagger} - 2 l_{ij}^{\dagger}
= c_{ii} +  c_{jj} - 2  c_{ij} +  \frac{(x_i - y_i + y_j - x_j)}{n}.
\end{equation}

By combining equations \ref{equation1} and \ref{equation2}, we get, as $N = n+ k$,
\begin{equation}\label{equation5}
r_{ij}^{D} =  (c_{ii}+ c_{jj}  - 2  c_{ij}) + \frac{n}{N} ( r_{ij}^{D_1} - (c_{ii}+ c_{jj}  - 2  c_{ij}) )  = \frac{  (n r_{ij}^{D_1} + k  (c_{ii}+ c_{jj}  - 2  c_{ij} ) )} {n+k} 
\end{equation} and
$r_{ij}^{D} + r_{ji}^D = 2( c_{ii} +  c_{jj} -  c_{ij} -  c_{ji})$. By Lemmas  by \ref{BBG2 lemma} and \ref{lemma6}
we thus have $$r_{ij}^{D} + r_{ji}^D = 2 \frac{\mathrm{det}L[ \{i,j\}^c, \{i,j\}^c]} {\kappa(D)} \leq 2.$$ Hence $c_{ii} +  c_{jj} -   c_{ij} - c_{ji} \leq 1$. The determinant of a square matrix is the same as the determinant of its transpose, we get $$c_{ij} = \frac{(-1)^{i+j}} {\kappa(D)}\mathrm{det}L[ \{i,n\}^c, \{j,n\}^c] = \frac{(-1)^{i+j}} {\kappa(D)} \mathrm{det}( [L[ \{i, n\}^c, \{j, n \}^c] ]^{'}) = c_{ji}.$$
Hence \begin{equation}\label{equation10} c_{ii} +  c_{jj} -  2 c_{ij} \leq 1.
\end{equation} By equations \ref{equation9}, \ref{equation5}  and \ref{equation10},  we conclude that
$r_{ij}^D \leq \frac{n+k}{n+k} =1. $
\end{proof}

We now present the proof of Theorem \ref{mainthm}.
\begin{proof}
Let $D = D_1  \cup D_2 \cup ... \cup D_{k-1} \cup D_{k}$ be a digraph in $\mathcal{C}$.  We prove Theorem \ref{mainthm} by using induction on $k$, the number of digraphs $D_i$.
Note that the union of any finite number of strongly connected and balanced digraphs is also strongly connected and balanced. Without loss of generality, we can assume that $G = D_1  \cup D_2 \cup ... \cup D_{k-1}$ is connected and also $G$ and $D_k$ have a single vertex in common. If $D$ is a union of two digraphs $D_1$ and $D_2$, then
$r_{ij}^{D}   \leq d_{ij}^D$ by Lemma \ref{prop1}.
The conjecture \ref{conj} is then true for $ G = D_1  \cup D_2 \cup ... \cup D_{k-1}$ by induction. Hence 
 Conjecture \ref{conj} is true for $D = G \cup D_k$ by Lemma \ref{prop1}. 
 \end{proof}
\begin{cor}[\cite{BBG2}, Theorem 3.5]
Let $D $ be a directed cactus. Then
 $r_{ij}^D \leq d_{ij}^{D}$ for all $i$ and $j.$
\end{cor}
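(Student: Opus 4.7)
The plan is to exhibit every directed cactus $D$ as an element of the class $\mathcal{C}$ of Theorem \ref{mainthm}, so that the inequality $r_{ij}^D \leq d_{ij}^D$ follows immediately from that theorem. By the defining property of a directed cactus, each arc lies in exactly one directed cycle; hence $D$ decomposes as a union $D = C_1 \cup C_2 \cup \cdots \cup C_k$ of pairwise arc-disjoint directed cycles, each $C_i$ being a connected balanced digraph (every vertex of a directed cycle has indegree and outdegree $1$).

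Next I would verify the base case required for Theorem \ref{mainthm} to bootstrap: each single cycle $C_i$ individually satisfies Conjecture \ref{conj}. Since every vertex of a directed cycle has indegree $1$, Lemma \ref{BBG lemma} gives $r_{uv}^{C_i} \leq 1$ for every arc $(u,v) \in E(C_i)$, and Lemma \ref{sublemma} then extends this to $r_{uv}^{C_i} \leq d_{uv}^{C_i}$ for every pair of vertices $u,v$ in $C_i$.

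The crucial step is to reorder the cycles as $C_{\sigma(1)}, C_{\sigma(2)}, \ldots, C_{\sigma(k)}$ so that $C_{\sigma(i)}$ meets $C_{\sigma(1)} \cup \cdots \cup C_{\sigma(i-1)}$ in exactly one vertex for every $i > 1$. This rests on the combinatorial observation that two distinct cycles of a directed cactus share at most one vertex: if they shared two vertices $u$ and $v$, then splicing a directed $u$-to-$v$ sub-path of one cycle with a $v$-to-$u$ sub-path of the other would yield a closed directed walk, from which one could extract a directed cycle containing arcs of both originals, contradicting the uniqueness clause of the cactus definition. Consequently, the cycles together with their shared cut-vertices organise into a tree, and any depth-first traversal of this tree yields the required ordering.

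With this ordering in hand, $D = C_{\sigma(1)} \cup \cdots \cup C_{\sigma(k)}$ is realised as a member of $\mathcal{C}$, and Theorem \ref{mainthm} yields $r_{ij}^D \leq d_{ij}^D$ for all $i, j$. The main obstacle is the combinatorial ordering step; once that structural fact about cacti is established, the deduction from Theorem \ref{mainthm} is immediate.
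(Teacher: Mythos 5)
Your proof follows essentially the same route as the paper: decompose the directed cactus into arc-disjoint balanced directed cycles ordered so that each meets the union of its predecessors in a single vertex, verify the base case for a single cycle via Lemma \ref{BBG lemma} (together with Lemma \ref{sublemma}), and invoke Theorem \ref{mainthm}. The paper simply asserts the existence of such an ordering, so your attempt to justify it is a genuine improvement; your splicing argument correctly shows that two distinct cycles of a directed cactus share at most one vertex (taking the first return vertex makes the spliced walk an honest directed cycle containing arcs of both, contradicting uniqueness). The one step that is too quick is the inference ``consequently the cycles organise into a tree'': pairwise single-vertex intersections alone do not exclude a cyclic chain $C_1, v_1, C_2, v_2, \ldots, C_m, v_m, C_1$ with $m \geq 3$ distinct cycles and distinct shared vertices $v_i \in C_i \cap C_{i+1}$, which would also defeat the desired ordering. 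That configuration is ruled out by the same splicing idea: concatenating the directed sub-paths of each $C_i$ from $v_{i-1}$ to $v_i$ gives a closed directed walk whose arc set decomposes into directed cycles, none of which can lie inside a single simple sub-path $P_i$, so some directed cycle uses arcs of two distinct $C_i$'s, again contradicting the cactus property. With that addition your argument is complete and matches the paper's deduction from Theorem \ref{mainthm}.
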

\begin{proof}
Let $D$ be a directed cactus. Then 
$D = D_1 \cup D_2 \cup ..... \cup D_k,  k \in \mathbb{N},$ where each $D_i$ is a balanced directed cycle and for all $i,$ $1 < i \leq k,$ $D_{i} \cap (D_1  \cup D_2 \cup ... \cup D_{i-1} )$ being a single vertex. We know that Conjecture\ref{conj} is true for $D_i$ by Lemma \ref{BBG lemma}. Hence $r_{ij}^D \leq d_{ij}^{D}$ by Theorem \ref{mainthm}.
\end{proof}
\section{Examples}
\begin{center}

\begin{center}
Figure 4.1. Strongly Connected and Balanced Digraph $D$ on $8$ vertices 
\end{center}
\vskip 15pt 
\begin{center}
\begin{tikzpicture} [scale=.8,auto=left,every node/.style={circle},decoration={markings,mark=at position 0.5 with {\arrow[scale=0.7]{triangle 60}}}
    ]
  
  \node[circle,draw] (n1) at (1,3) {v1} ;
  \node[circle,draw] (n3) at (3,5)  {v3} ;
  \node[circle,draw] (n2) at (3,1) {v2} ;
   \node[circle,draw] (n4) at (5,5) {v4}  ;
  \node[circle,draw] (n6) at (7,3)  {v6} ;
  \node[circle,draw] (n5) at (5,1)  {v5} ;
  \node[circle,draw] (n7) at (9,5)  {v7} ;
    \node[circle,draw] (n8) at (9,1)  {v8} ;

  \foreach \from/\to in {n1/n3,n2/n1, n3/n4,n5/n2,n4/n6,n6/n5,n6/n7,n7/n8, n8/n6}
    \draw[postaction={decorate}] (\from) -- (\to);     
    
   \draw
    (n2) edge[ bend left=15,postaction={decorate}] (n3)
    (n3) edge[ bend left=15,postaction={decorate}] (n2);  
\end{tikzpicture}
\end{center}
\begin{center} 
Figure 4.2 $D_1$ \ \ \ \ \ \ \ \ \ \ \  \ \ \ \ \ \ \ \ \ \ \ \ \ \ \ \ Figure 4.4 $D_2$
\end{center}
\begin{center}
\begin{tikzpicture}
   [scale=.8,auto=left,every node/.style={circle},decoration={markings,mark=at position 0.5 with {\arrow[scale=0.7]{triangle 60}}}
    ]
  
  \node[circle,draw] (n1) at (1,3) {v1} ;
  \node[circle,draw] (n3) at (3,5)  {v3} ;
  \node[circle,draw] (n2) at (3,1) {v2} ;
   \node[circle,draw] (n4) at (5,5) {v4}  ;
  \node[circle,draw] (n6) at (7,3)  {v6} ;
  \node[circle,draw] (n5) at (5,1)  {v5} ;
  \foreach \from/\to in {n1/n3,n2/n1, n3/n4,n5/n2,n4/n6,n6/n5}
    \draw[postaction={decorate}] (\from) -- (\to);     
    
   \draw
    (n2) edge[ bend left=15,postaction={decorate}] (n3)
    (n3) edge[ bend left=15,postaction={decorate}] (n2); 
\end{tikzpicture}
\qquad
\begin{tikzpicture}
 [scale=.8,auto=left,every node/.style={circle},decoration={markings,mark=at position 0.5 with {\arrow[scale=0.7]{triangle 60}}}
    ]

  \node[circle,draw] (n6) at (1,3)  {v6} ;
  \node[circle,draw] (n7) at (3,5)  {v7} ;
    \node[circle,draw] (n8) at (3,1)  {v8} ;

  \foreach \from/\to in {n6/n7,n7/n8, n8/n6}
    \draw[postaction={decorate}] (\from) -- (\to);    
\end{tikzpicture}
\end{center}
\begin{equation*}
L =   \left[ \begin{array}{cccccccc}
1& 0 & -1& 0 & 0 & 0 & 0 & 0 \\
-1 & 2 & -1 & 0 & 0 & 0 & 0 & 0 \\
0 & -1 &  2 & -1 & 0 & 0 & 0 & 0 \\
0 & 0 & 0 & 1 & 0 & -1 & 0 & 0 \\
0 & -1&  0 & 0 & 1 & 0 & 0 & 0 \\
0 & 0 & 0 & 0 & -1 & 2 & -1& 0 \\
0 & 0 & 0 & 0 & 0 & 0 & 1 & -1 \\
0 & 0 & 0 & 0 & 0 & -1 & 0 &  1
\end{array}
\right],\ 
\end{equation*}
\begin{equation*}
L_1 = \left[ \begin{array}{cccccc}
1 & 0 & -1 & 0  & 0 & 0  \\
-1 & 2 & -1 & 0 & 0 & 0 \\ 
0 & -1 & 2 & -1 & 0 & 0 \\
0 & 0 & 0 & 1 & 0 & -1 \\ 
0 & -1 & 0 & 0 & 1 & 0 \\
0 & 0 & 0 & 0 & -1 & 1 
\end{array}
\right], \
L_2 = \left[ \begin{array}{ccc}
1 & -1 & 0 \\
0 & 1 & -1 \\
-1 & 0 & 1\\
\end{array}
\right]
\end{equation*}

\vskip 10pt

\begin{equation*}
L^{\dagger} =   \left[ \begin{array}{cccccccc}
 0.8125 & -0.0625 & 0.3125 & 0.1875 & -0.3125 & -0.1875 & -0.3125 & -0.4375 \\
 0.3125 & 0.4375 & 0.3125 & 0.1875 & -0.3125 &  -0.1875 & -0.3125 & -0.4375 \\
 -0.0625 & 0.0625 & 0.4375 & 0.3125 & -0.1875 & -0.0625 & -0.1875 & -0.3125 \\
 -0.3125 & -0.1875 & -0.3125 & 0.5625 & 0.0625 & 0.1875 & 0.0625 & -0.0625 \\
 0.1875 & 0.3125 & 0.1875 & 0.0625 & 0.5625 & -0.3125 & -0.4375 & -0.5625 \\
 -0.1875 & -0.0625 & -0.1875 & -0.3125 & 0.1875 & 0.3125 & 0.1875 & 0.0625 \\
 -0.4375 & -0.3125 & -0.4375 & -0.5625 & -0.0625 & 0.0625 & 0.9375 & 0.8125 \\
 -0.3125 & -0.1875 & -0.3125 & -0.4375 & 0.0625 & 0.1875 & 0.0625 & 0.9375 
\end{array}
\right] \ \
\end{equation*}

\begin{equation*}
L_1^{\dagger} = \left[ \begin{array}{cccccc}
0.6389 & -0.1944 & 0.1389 & -0.0278 & -0.3611 & -0.1944 \\
 0.1389 & 0.3056 & 0.1389  & -0.0278 & -0.3611 & -0.1944 \\
 -0.1944 & -0.0278 & 0.3056 & 0.1389 &  -0.1944 & -0.0278 \\
 -0.3611 & -0.1944 & -0.3611 & 0.4722 & 0.1389 & 0.3056 \\
 -0.0278  & 0.1389 & -0.0278 & -0.1944 & 0.4722 & -0.3611 \\
 -0.1944 &  -0.0278 & -0.1944 & -0.3611&  0.3056 & 0.4722 
 \end{array}
 \right], 
 \end{equation*}
 \begin{equation*}
 L_2^{\dagger} = \left[ \begin{array}{ccc}
 0.3333& 0.3333 & 	0.3333	\\
-0.1666 &	0.33333333 &	-0.1666\\	
-0.1666 &	-0.6666 &	-0.1666
 \end{array}
 \right].
 \end{equation*}
Figure 4.4. Strongly Connected Digraph $D$ on $4$ vertices
\end{center}
{\scalefont{3.0}

\begin{center}
\begin{tikzpicture} [decoration={markings,mark=at position 0.5 with {\arrow[scale=0.7]{triangle 60}}}
    ]
 \tikzset{vertex/.style = {scale=.4, shape=circle,draw}}
        \tikzset{edge/.style = {->,> = latex'}}
\node[vertex](v1) {$v1$};
\node[vertex,right=of v1] (v3) {$v3$};  
\node[vertex,left=of v1] (v4) {$v4$};  
\node[vertex,above=of v1] (v2) {$v2$};  
\draw
(v1) edge[ bend right=30,auto=right,postaction={decorate}] node {} (v3)
(v3) edge[ bend right=30,auto=right,postaction={decorate}] node {} (v1)
(v4) edge[bend right=30,auto=right,postaction={decorate}] node {} (v1)
(v1) edge[ bend right=30,auto=right,postaction={decorate}] node {} (v4)
(v2) edge[auto=below,postaction={decorate}] node {} (v1)
(v4) edge[postaction={decorate}] node {} (v2);   
\end{tikzpicture}
\end{center}
}
The indegree at the vertex $v_1$ is $3$ and the outdegree at the vertex $v_1$ is 2.
Hence the above graph $D$ is not balanced. 
The degree matrix $D$ and the adjacency matrix $A$ are given by\\
\begin{equation*}
D =  \left[ \begin{array}{cccc}
2 & 0 & 0 & 0 \\
0 & 1 & 0 & 0 \\
0 & 0 & 1 & 0 \\
0 & 0 & 0 & 0 
\end{array} \ \ 
 \right],
 \ \ 
A=  \left[ \begin{array}{cccc}

0 & 0 & 1 & 1 \\
1 & 0 & 0 & 0 \\
1 & 0 & 0 & 0 \\
1 & 1 & 0 & 0 
\end{array}
\right]
\end{equation*}

\smallskip
The Laplacian matrix $L(D)$ and the Moore-Penrose inverse of $L(D)$ are given by
\smallskip

\begin{equation*}
L(D) = D - A =   \left[ \begin{array}{cccc}
2& 0 & -1 & -1\\
-1 & 1 & 0 & 0 \\
-1 & 0 & 1 & 0 \\
-1 & -1 & 0 & 2 
\end{array}
\right], \ \
L(D)^{\dagger} = \left[ \begin{array}{cccc}
0.2 & -0.275 & -0.05 & -0.025\\
0 & 0.625 & -0.25 & -0.125 \\
-0.2 & -0.475 & 0.5500 & -0.225 \\
0 & 0.125 & -0.25 & 0.375
\end{array}
\right]
\end{equation*}
\smallskip
 $$ \mathrm{We} \ \mathrm{have} \ r_{31} = l_{33}^{\dagger} + l_{11}^{\dagger} - 2 l_{31}^{\dagger} = 0.2 + 0.55+0.4
= 1.15.$$
Note $(3,1)$ is an arc. Hence $d_{31} = 1.$
We have $r_{31} > d_{31}$.
\\
Hence the graph $D$ is a counterexample which shows that the condition 
``balanced" in Conjecture \ref{conj} cannot be removed. 


 \subsection{Concluding Remark}
The natural questions to ask are\\
(i) In Figure 4.2  the digraph $D_1 = D \cap D^{'},$ where the digraph $D$ is a balanced triangle and the digraph $D^{'}$ is a balanced pentagon, numerically we have verified Conjecture \ref{conj2} for $D_1.$ However an abstract proof is not yet known. More precisely, If $D$ and $D^{'}$ both are connected, balanced digraphs such that $D \cap D^{'}$ have only two vertices in common and Conjecture \ref{conj2} is true for both $D$ and $D^{'}$  then whether it is true for the union  $D \cup D^{'}$? 
\\
(ii) If $D_1, D_2,...,D_k$ are connected, balanced digraphs with $D_1 \cup D_2 \cup ....\cup D_k$ is connected, $D_{i} \cap D_j$ being at most a single vertex, for all $i \neq j,$ $1 \leq i,j \leq k$ and Conjecture  \ref{conj2} is true for all $D_i$, $i=1$ to $k$, then whether it is true for  $D=D_1 \cup D_2 \cup ....\cup D_k?$ i.e., whether $D$ satisfies $r_{ij}^D \leq d_{ij}^D$ for all $i$ and $j$?
The methods in the proof of our main theorem [Theorem \ref{mainthm}] is not applicable to answer (i) and (ii). We need alternative methods to answer these questions.

\section*{Acknowledgement}
We would like to thank Bharathidasan University, IISER, Thiruvananthapuram and  San Jose State University for providing the excellent working conditions.
We thank Ananth Narayanan for some helpful comments.\\


\end{document}